\newtheorem*{rep@theorem}{\rep@title}
\renewcommand{\subset}{\subseteq}
\newif\ifdraft
\newif\ifcolorcomments
\newcommand{\allowcomments}[4]{
\newcommand{#1}[1]{\ifdraft{\ifcolorcomments{\textcolor{#4}{##1 --#3}}\else{\textsl{##1 \ --#3}}\fi}\else{}\fi}
}
\allowcomments{\comdavid}{DS}{David}{green}
\allowcomments{\comsanju}{SV}{Sanju}{blue}
\font\tenmsy=msbm10 scaled 1200 \font\sevenmsy=msbm7 scaled 1200
\font\fivemsy=msbm5 scaled 1200
\newcommand{\R}{{\mathbb R}}
\newcommand{\Z}{{\mathbb Z}}
\newcommand{\N}{{\mathbb N}}
\newcommand{\cH}{{\cal H}}
\newcommand{\cM}{{\cal M}}
\newcommand{\hf}{{\cal H}^{f}}
\newcommand{\hs}{{\cal H}^{s}}
\newcommand{\p}{\psi}
\newcommand{\I}{{\rm I}}
\newcommand{\ep}{ \varepsilon }
\newtheorem{thdet}{{\bf Theorem A}\!\!}
\newtheorem{thdetprime}{{\bf Theorem A$'$}\!\!}
\newtheorem{thdeta}{{\bf Proposition A}\!\!}
\newtheorem{thbs}{Theorem (Peres-Solomyak)\!\!}
\newtheorem{thcs}{Theorem (Peres--Shmerkin)\!\!\!}
\newtheorem{thdel}{Theorem (Marstrand)\!\!}
\newtheorem{theorem}{Theorem}
\newtheorem{lemma}[theorem]{Lemma}
\newtheorem{corollary}[theorem]{Corollary}
\newtheorem{proposition}[theorem]{Proposition}
\newtheorem{definition}[theorem]{Definition}
\newtheorem{thKJa}{Theorem (Khintchine--Jarn\'{\i}k) \!\!\!\!}
\theoremstyle{remark}
\newtheorem{rem}{Remark}
\newcommand{\be}{\begin{eqnarray*}}
\newcommand{\ee}{\end{eqnarray*}}
\newcommand{\ve}{\varepsilon}
\newcommand{\proj}{{\rm{proj}}}
\begin{document}

\title{Marstrand's Theorem revisited: projecting sets of dimension zero }

\author{ Victor Beresnevich\footnote{Research partly supported by EPSRC grant EP/J018260/1 }   \\ {\small\sc (York) }  \and
Kenneth Falconer \\ {\small\sc (St Andrews) }  \\  { \ }
\and Sanju Velani\footnote{Research partly supported by EPSRC grant EP/J018260/1 } \\ {\small\sc (York)}
 \and
  Agamemnon Zafeiropoulos \footnote{Research  supported by EPSRC Doctoral and  Departmental Teaching Studentships} \\ {\small\sc (York)}   % \and ~ \hspace{9cm}
 \\ ~ \\  {\Large With an Appendix: The gap of uncertainty} ~ \vspace{-2ex}  \and ~ \hspace{19cm} { \ }  \\     ~\vspace{-3ex}  \\  \and   {\small David Simmons\footnote{Research supported by EPSRC grant EP/J018260/1 } } \\ {\footnotesize\sc (York) }   \and
{\small Han Yu} \\ {\footnotesize\sc (St Andrews) }  \and
 {\small  Agamemnon Zafeiropoulos } \\ {\footnotesize\sc (York)}    \\ ~ \\
}

\date{}

%\date{{\small\today}}

\maketitle

\begin{abstract}
\noindent We establish a refinement of Marstrand's projection theorem for Hausdorff dimension functions finer than the usual power functions, including an analogue of Marstrand's Theorem for logarithmic Hausdorff dimension.
\end{abstract}

\bigskip

%{\small\noindent\emph{Key words and phrases}: metric Diophantine
%approximation, ubiquity, logarithm law for geodesics
%
%\medskip

\noindent\emph{2010 Mathematics Subject classification}: 28A80, 28A78, 11J83

\section{Introduction}

 \subsection{Motivation} Given $0\leq \theta <\pi$,  let $L_{\theta}$ denote the line through the origin of $\R^2$ that forms an angle $\theta$ with the horizontal axis.  Let $\proj_{\theta}$ denote orthogonal  projection onto the line $L_{\theta}$ and $\dim A$ denote the Hausdorff dimension of a set  $A\subset \mathbb{R}^2$.    Then $ \proj_{\theta}$ is a Lipschitz mapping; indeed for all $\theta$,
 \begin{equation} \label{lippy}
|  \proj_{\theta}x    -   \proj_{\theta}y  |  \leq  |  x    -  y  |    \quad  \forall   x,y \in \mathbb{R}^2 \,  .
\end{equation}
 This together with the trivial fact that  $\proj_{\theta}A  $  is a subset of a line, implies
 \begin{equation} \label{triv}
 \dim \proj_{\theta} A  \leq \min\left\{1,  \dim A\right\}     \, ,
\end{equation}
see for example \cite[Proposition 3.3]{F}.
The famous projection theorem of Marstrand  \cite{Mar}, dating from 1954, tells us that equality holds in \eqref{triv} for almost almost all directions $\theta$ with respect to Lebesgue measure ${\mathcal L}$. Equivalently,   the exceptional values of $\theta \in [0,\pi)$  for which  the  inequality \eqref{triv} is strict,  form  a set of one-dimensional Lebesgue measure zero.

\begin{thdel} Let $A\subset \R^2$ be a Borel set.
~\vspace*{-1ex}
\begin{itemize}
\item[(i)] If $\dim A\leq 1$ then $\dim {\proj}_{\theta}A=\dim A$ for almost all $\theta \in[0,\pi)$.
    ~\vspace*{-1ex}
    \item[(ii)] If $\dim A>1$ then ${\mathcal L}(\proj_{\theta}A)>0$  for  almost all $\theta \in[0,\pi)$.
\end{itemize}
\end{thdel}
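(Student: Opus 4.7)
The plan is to apply the potential-theoretic method via Frostman's lemma. For a compactly supported Borel probability measure $\mu$ on $\R^2$ and $s\geq 0$, define the $s$-energy
\[
I_s(\mu) \df \iint |x-y|^{-s}\,d\mu(x)\,d\mu(y).
\]
Frostman's lemma characterises $\dim A$ as the supremum of $s$ for which $A$ carries such a $\mu$ with $I_s(\mu)<\infty$; by inner regularity we may assume $A$ is compact. Given such a $\mu$, let $\mu_\theta \df (\proj_\theta)_*\mu$ be the pushforward, a Borel probability measure on the line $L_\theta$ supported on $\proj_\theta A$. The strategy is to control the average of $I_s(\mu_\theta)$ (for (i)) or of $\|\hat{\mu_\theta}\|_{L^2}^2$ (for (ii)) over $\theta\in[0,\pi)$.

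For (i), fix $0<s<\dim A\leq 1$ and pick $\mu$ with $I_s(\mu)<\infty$. Applying Fubini,
\[
\int_0^\pi I_s(\mu_\theta)\,d\theta \ = \ \iint \left(\int_0^\pi |\proj_\theta(x-y)|^{-s}\,d\theta\right) d\mu(x)\,d\mu(y).
\]
Writing $x-y=|x-y|\,(\cos\phi,\sin\phi)$ gives $|\proj_\theta(x-y)|=|x-y|\,|\cos(\theta-\phi)|$, so the inner integral equals $c_s\,|x-y|^{-s}$ with $c_s \df \int_0^\pi |\cos\theta|^{-s}\,d\theta$ finite precisely because $s<1$. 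Therefore $\int_0^\pi I_s(\mu_\theta)\,d\theta = c_s\,I_s(\mu)<\infty$, so $I_s(\mu_\theta)<\infty$ for almost every $\theta$, and Frostman applied on $L_\theta$ gives $\dim \proj_\theta A\geq s$ for a.e.\ $\theta$. Taking $s\uparrow \dim A$ along a countable sequence and combining with the trivial bound \eqref{triv} proves (i).

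For (ii), assume $\dim A>1$ and pick a compactly supported $\mu$ on $A$ with $I_1(\mu)<\infty$. The key ingredient is the Riesz-kernel identity $\widehat{|x|^{-1}}=c\,|\xi|^{-1}$ in $\R^2$ (interpreted distributionally), which yields
\[
I_1(\mu) \ = \ c\int_{\R^2} \frac{|\hat\mu(\xi)|^2}{|\xi|}\,d\xi.
\]
Since $\hat{\mu_\theta}(t) = \hat\mu(t\,e_\theta)$ with $e_\theta \df (\cos\theta,\sin\theta)$, passing to polar coordinates $\xi = r\,e_\theta$ and using the symmetry $|\hat\mu(-\xi)|=|\hat\mu(\xi)|$ to fold $\theta\in[0,2\pi)$ onto $[0,\pi)$ (with $r$ extended to $\R$) yields
\[
\int_0^\pi \int_\R |\hat{\mu_\theta}(t)|^2\,dt\,d\theta \ = \ \int_{\R^2} \frac{|\hat\mu(\xi)|^2}{|\xi|}\,d\xi \ < \ \infty.
\]
Hence $\hat{\mu_\theta}\in L^2(\R)$ for almost every $\theta$; Plancherel then shows $\mu_\theta$ is absolutely continuous with an $L^2$-density for such $\theta$. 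As $\mu_\theta(\proj_\theta A)=1$, this forces $\LL(\proj_\theta A)>0$.

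The main technical obstacle is justifying the Fourier/energy identity in (ii) as a distributional pairing against the Riesz kernel; this is handled by mollifying $\mu$ and passing to a limit, or equivalently by recognising the energy as a Parseval--Plancherel identity after convolution with a smooth bump. The remaining manipulations are applications of Fubini, which become valid once the relevant energies have been shown to be finite. I would also remark that the trivial upper bound \eqref{triv} forces $\dim \proj_\theta A=1$ for a.e.\ $\theta$ in case (ii), so the content of the theorem there is the positivity of Lebesgue measure, which is strictly stronger than the equality of dimensions.
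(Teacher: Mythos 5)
Your proof is correct and is essentially the approach the paper itself relies on: the paper does not reprove Marstrand's theorem (it cites Marstrand and Kaufman for it), but its proof of Theorem 2 is exactly your part (i) computation --- average the energy of the projected measures $\mu_\theta$ over $\theta$, use the finiteness of $\int_0^\pi |\cos(\theta-\phi)|^{-s}\,\mathrm{d}\theta$ for $s<1$, and invoke the Frostman/capacity characterisation of dimension --- carried out there with general dimension functions in place of $r^s$. Your part (ii), via the Riesz-kernel/Plancherel identity giving $\hat{\mu_\theta}\in L^2$ and hence an $L^2$ density for almost every $\theta$, is the standard Kaufman--Mattila argument; the paper never needs it, since its refinement concerns only the dimension-at-most-one regime, and your handling of the one delicate point (justifying the energy--Fourier identity by mollification, then folding the polar integral onto $[0,\pi)$) is sound.
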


\noindent  Observe that   the measure  conclusion  of (ii)  is significantly stronger than the corresponding  dimension statement; it  trivially implies  that $\dim {\proj}_{\theta}A = 1 $ for almost all $\theta \in[0,\pi)$.    Marstrand's  proof  depends heavily on delicate and, in places, complicated geometric  and measure theoretic arguments.   Subsequently,  Kaufman \cite{K} gave a slick, two page, proof that made natural use of the potential theoretic characterization of Hausdorff dimension and Fourier transform methods.

Here we will be  concerned with the case when $\dim A\leq 1$.   Indeed, to motivate our investigation, consider the extreme situation  when $\dim A=~0$.  Then Marstrand's Theorem implies no more than the trivial statement that
$\dim \proj_{\theta} A = 0$ for all $\theta$.   Thus,  to obtain non-trivial information in such situations,  it is natural to ask whether a version of Marstrand's Theorem  remains valid for  finer notions of Hausdorff dimension. One consequence of Theorem~\ref{mainthm}, our main result, is the following  analogue of Marstrand's Theorem for logarithmic Hausdorff dimension, that is where the Hausdorff measures are defined with respect to dimension or gauge functions $(-\log r)^{-s}$ (for small $r$) and $s\geq 0$, and $\dim_{\rm  \log}A$ is the critical value of $s$ at which these measures jump from $\infty$ to $0$,  see \S\ref{mainlogsec} for the full definitions.

\vspace*{2ex}

\begin{theorem} \label{logResult} Let $A\subset  \R^2$ be a Borel set.  Then
  ~\vspace*{-1ex}
\begin{itemize}
\item[(i)] $ \dim_{\rm \log} {\proj}_{\theta}A\leq\dim_{\rm  \log} A$ for all $\theta \in[0,\pi)$,
    ~\vspace*{-1ex}
    \item[(ii)] $ \dim_{\rm \log} {\proj}_{\theta}A=\dim_{\rm  \log} A$ for ${\mathcal L}$-almost all $\theta \in[0,\pi)$.
\end{itemize}
\end{theorem}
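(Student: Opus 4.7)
For part (i), observe that since $\proj_{\theta}$ is $1$-Lipschitz by \eqref{lippy}, any cover of $A$ by sets of diameter $\le r$ maps to a cover of $\proj_\theta A$ of diameter $\le r$. Because the gauge $\phi_s(r) = (-\log r)^{-s}$ is monotone increasing for $r < 1$, we obtain $\mathcal{H}^{\phi_s}(\proj_\theta A) \leq \mathcal{H}^{\phi_s}(A)$ for every $\theta$ and every $s\geq 0$, and taking the critical exponent yields $\dim_{\log}\proj_\theta A \leq \dim_{\log} A$.

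For part (ii), the plan is to adapt Kaufman's potential-theoretic proof of Marstrand's theorem to the logarithmic gauge. By scaling we may assume $A$ is a compact set contained in $B(0,e^{-1})$, so that $-\log|x-y|\geq 1$ throughout $A$. Define the logarithmic $s$-energy of a finite Borel measure $\mu$ by
\[
E_s(\mu) \df \iint (-\log|x-y|)^{s}\, d\mu(x)\, d\mu(y), \qquad s\geq 0.
\]
Two ingredients are needed. First (energy $\Rightarrow$ dimension): if $E_s(\mu)<\infty$, the potential $U_s^{\mu}(x) \df \int (-\log|x-y|)^{s}\, d\mu(y)$ is finite at $\mu$-a.e.\ $x$, and Markov's inequality gives $\mu(B(x,r))\leq U_s^{\mu}(x)(-\log r)^{-s}$ at such points; restricting $\mu$ to a subset on which $U_s^{\mu}$ is uniformly bounded and applying the mass distribution principle with gauge $\phi_s$ forces $\dim_{\log}\supp\mu \geq s$. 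Second (dimension $\Rightarrow$ energy): if $\dim_{\log} A > s$, there exists a non-trivial finite Borel measure $\mu$ supported on $A$ with $E_s(\mu)<\infty$. This should follow by starting from a Frostman-type measure for the gauge $\phi_{s'}$ with $s<s'<\dim_{\log}A$, so that $\mu(B(x,r))\lesssim (-\log r)^{-s'}$, and converting the mass bound into an energy bound via the layer-cake identity
\[
\int (-\log|x-y|)^{s} d\mu(y) = s\int_0^\infty t^{s-1}\, \mu(B(x,e^{-t}))\, dt.
\]

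With these in hand, fix $s<\dim_{\log} A$ and such a measure $\mu$ on $A$. Writing $x-y$ in polar form as $\rho(\cos\varphi,\sin\varphi)$, we have $|\proj_\theta(x-y)| = \rho\,|\cos(\theta-\varphi)|$, hence
\[
(-\log|\proj_\theta(x-y)|)^{s} \leq C_s\bigl[(-\log|x-y|)^{s} + (-\log|\cos(\theta-\varphi)|)^{s}\bigr]
\]
via $(a+b)^s \leq C_s(a^s+b^s)$ for $a,b\geq 0$; both terms are non-negative since $|x-y| < 1$ and $|\cos(\cdot)| \leq 1$. Integrating over $\theta\in[0,\pi)$, using that $D_s \df \int_0^\pi (-\log|\cos u|)^{s}\, du$ is a finite constant, and applying Fubini's theorem, we obtain
\[
\int_0^\pi E_s(\proj_\theta \mu)\, d\theta \leq C_s\pi\, E_s(\mu) + C_s D_s\, \mu(\R^2)^{2} < \infty.
\]
Hence $E_s(\proj_\theta \mu)<\infty$ for $\mathcal{L}$-a.e.\ $\theta$, and the first ingredient applied to the pushforward $\proj_\theta \mu$ gives $\dim_{\log}\proj_\theta A\geq s$ for a.e.\ $\theta$. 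Intersecting the resulting full-measure sets over a countable sequence $s_n\nearrow \dim_{\log}A$ completes the lower bound, and combined with (i) gives equality.

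The main obstacle I expect is the second ingredient: packaging the hypothesis $\dim_{\log} A>s$ into an honest measure on $A$ with \emph{finite} logarithmic $s$-energy. The standard Frostman--Howroyd construction yields a measure with the correct ball growth $\mu(B(x,r))\lesssim (-\log r)^{-s'}$, but converting this into $E_s(\mu)<\infty$ via the layer-cake identity requires a genuine gap $s'>s$ and uniform control of the tail at scales $r\approx 1$, where the integrand $t^{s-1}\mu(B(x,e^{-t}))$ is largest. Keeping this quantitative control clean for all $s$ below the critical exponent is the delicate step; everything else---the Fubini/projection estimate and the countable-intersection argument---is essentially routine.
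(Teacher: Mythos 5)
Your proposal is correct, but it takes a different route from the paper: there, Theorem \ref{logResult} is a two-line corollary of the general Theorem \ref{mainthm}, obtained by taking $g(r)=(-\log^*r)^{-s_1}$, $f(r)=(-\log^*r)^{-s_2}$ with $s_1<s_2<\dim_{\rm\log}A$, checking that $g$ is doubling with constant $c<2$ and that the pair satisfies the integral condition \eqref{(1)}, and then letting $s_1\to\dim_{\rm\log}A$. You instead re-run Kaufman's argument directly in the logarithmic scale, and your two ``ingredients'' are exactly the specialisations of the paper's machinery: ingredient 1 is Proposition \ref{prop6} in contrapositive (finite $\phi_s$-energy forces positive $\cH^{\phi_s}$-measure of a subset of the support, via Markov/Egorov plus the mass distribution principle and \eqref{mesequiv}), and ingredient 2 is Proposition \ref{p1} with $f=\phi_{s'}$, $g=\phi_s$, where your layer-cake identity is just the integration-by-parts step there made concrete; the bound $s\int_0^\infty t^{s-1}\min\{1,Ct^{-s'}\}\,\mathrm{d}t<\infty$ holds precisely because $s'>s$ (and $t^{s-1}$ is integrable at $0$), so the ``delicate step'' you worry about at the end is not actually problematic -- the needed gap $s<s'$ is available since $s<\dim_{\rm\log}A$, and the case $s=0$ is vacuous. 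The genuinely different point is your treatment of the angular integral: writing $-\log|\proj_\theta(x-y)|=-\log|x-y|-\log|\cos(\theta-\varphi)|$ and using $(a+b)^s\le C_s(a^s+b^s)$ turns the projection cost into an additive term with $\int_0^\pi(-\log|\cos u|)^s\,\mathrm{d}u<\infty$ for \emph{every} $s\ge0$, whereas the paper's proof of Theorem \ref{mainthm} uses the doubling exponent of $g$ and needs $\int_0^\pi|\cos u|^{-\sigma}\mathrm{d}u<\infty$, i.e.\ $\sigma=\log_2c<1$; your computation shows this constraint is vacuous for logarithmic gauges, which is a nice observation. What the paper's route buys is the general statement (arbitrary gauge pairs satisfying \eqref{(1)}, plus the exceptional-set refinement of \S\ref{exceptangles}); what yours buys is a self-contained proof that avoids verifying \eqref{(1)} and the doubling hypothesis, and you make explicit the countable intersection over $s_n\nearrow\dim_{\rm\log}A$ that the paper leaves implicit. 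Two small repairs: $B(0,e^{-1})$ has diameter $2/e>e^{-1}$, so either shrink the ball (any ball of diameter $<1$ suffices, since you only need $-\log|x-y|>0$) or use $\log^*$ as the paper does; and the reduction of a general Borel $A$ to a bounded one should be justified by countable stability of $\dim_{\rm\log}$ (some $A\cap B(0,n)$ has the same logarithmic dimension), after which the Frostman measure supplied by Theorem \ref{frostman} is compactly supported in $A$, so $\supp(\proj_\theta\mu)\subset\proj_\theta A$ and your mass-distribution conclusion indeed bounds $\dim_{\rm\log}\proj_\theta A$ from below.
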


\vspace*{2ex}

\begin{rem} \label{exceptremark}  By considering the size of sets  of  exceptional angles, see \S\ref{exceptangles}, we are further able to conclude that
\begin{equation} \label{exceptequation}
\dim_{\rm \log} \left\{  \theta \in[0,\pi)  :   \dim_{\rm \log} {\proj}_{\theta}A  < \dim_{\rm  \log} A  \right\}  \,  \le  \,    \dim_{\rm \log} A \, .
\end{equation}
Of course, the  interesting case is  when  $\dim_{\rm  \log} A $ is finite. Then,  by definition $\dim A = 0 $ and so  \eqref{exceptequation} is significantly stronger than Theorem \ref{logResult}.
%The latter simply implies that the `exceptional' set  on the right hand side of \eqref{exceptequation} is of one-dimensional Lebesgue measure zero.
\end{rem}

Before moving onto our main result, Theorem \ref{mainthm}, which is an analogue of Marstrand's Theorem for a general class of dimension functions, we consider an explicit class of  sets that has motivated our work and which illustrates and clarifies the need for statements  such as   Theorem \ref{logResult}.

\vspace*{2ex}

\noindent {\em  The motivating example.}  Let  $\p:\R^+\to\R^+$ be a  decreasing
 function. A point $(y_1,\dots,y_k)\in\R^k$ is called {\it
simultaneously $\psi$--approximable} if there are infinitely many
$q\in\N$ and $(p_1, \ldots,p_k) \in \Z^k$ such that
\begin{equation}
%\max_{1\le i\le k}
 \left|y_i - \frac{p_i}{q}\right|\ <\
\psi(q) \hspace{9mm}   1
\leq i \leq k \ . \label{1}
\end{equation}
 The set of simultaneously
$\p$--approximable points in $\I^k:=[0,1]^k$ will be denoted by
$W_k(\psi)$. For convenience,  we work  within the unit cube
$\I^k$ rather than $\R^k$  as it makes full measure results easier
to state and avoids ambiguity. This is not at all
restrictive as the set of simultaneously $\psi$-approximable
points is invariant under translations by integer vectors.   The following statement provides a beautiful and simple criterion for the `size'  of  $W_k(\psi)$  in terms of
Hausdorff measures with respect to a dimension function $f$, see  \S\ref{mainresultsec} for the full definition of these measures.

\begin{thKJa}
\label{mainkj} Let $\p:\R^+\to\R^+$ be  a  decreasing
function. Let $f$ be a dimension function such that   $r^{-k}
\, f(r) $ is monotonic. Then $$ \hf\left(W_k(\p)\right)=\left\{\begin{array}{cl} 0
& {\rm \ if} \;\;\; \sum_{q=1}^{\infty}  \; q^k \,
f\left(\p(q)\right)
 <\infty \; ,\\[2ex]
\hf(\I^k) & {\rm \ if} \;\;\; \sum_{q=1}^{\infty}  \; q^k \,
f\left(\p(q)\right) =  \infty           \; .
\end{array}\right.$$
\end{thKJa}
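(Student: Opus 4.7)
The plan is to handle the two halves of the dichotomy separately, as is standard for Khintchine--Jarn\'\i k type results.

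For the convergence direction I would argue by a direct Hausdorff measure covering argument. Since $W_k(\psi)$ is, by definition, a $\limsup$ of the rational balls $B(\pbf/q,\psi(q))$, for every integer $Q\ge 1$ one has the natural cover
\[
W_k(\psi)\;\subset\;\bigcup_{q\ge Q}\bigcup_{\pbf\in\Z^k\cap q\I^k} B\bigl(\pbf/q,\psi(q)\bigr),
\]
whose $f$-premeasure is bounded, up to a constant depending only on $k$, by $\sum_{q\ge Q}q^k f(\psi(q))$. Under the convergence hypothesis this tail tends to $0$ as $Q\to\infty$, and hence $\hf(W_k(\psi))=0$.

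For the divergence direction, which is the substantive half, the plan is to invoke the Mass Transference Principle (MTP) of Beresnevich and Velani; the standing hypothesis that $r^{-k}f(r)$ be monotonic is precisely what the MTP requires. Define an auxiliary approximation function
\[
\tilde\psi(q)\df f\bigl(\psi(q)\bigr)^{1/k}.
\]
Since $f$ is an increasing dimension function with $f(0^+)=0$ and $\psi$ is decreasing, $\tilde\psi$ is a legitimate decreasing approximation function, and by construction
\[
\sum_{q=1}^\infty q^k \tilde\psi(q)^k \;=\; \sum_{q=1}^\infty q^k f\bigl(\psi(q)\bigr)\;=\;\infty.
\]
The classical Khintchine theorem for Lebesgue measure on $\I^k$ then yields $\mathcal{L}\bigl(W_k(\tilde\psi)\bigr)=\mathcal{L}(\I^k)$. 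Crucially, the rational balls defining $W_k(\tilde\psi)$ are precisely the $f$-dilates, in the MTP sense, of those defining $W_k(\psi)$, since $B(\pbf/q,\tilde\psi(q))=B\bigl(\pbf/q,f(\psi(q))^{1/k}\bigr)$. The MTP therefore upgrades the Lebesgue full measure statement for $W_k(\tilde\psi)$ to the full $\hf$-measure statement $\hf(W_k(\psi))=\hf(\I^k)$, as required.

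The main obstacle is conceptual rather than technical: historically the divergence half required an intricate construction of a well-distributed Cantor-type subset of $W_k(\psi)$ together with a carefully tailored mass distribution, and it is the MTP that collapses this to the short transference argument above. The only residual checks are that $\tilde\psi$ inherits the required monotonicity from $\psi$ and $f$, that the hypothesis on $f$ matches the one demanded by the MTP, and that passing between $\R^k$ and the unit cube $\I^k$ is harmless, which follows from the invariance of $W_k(\psi)$ under integer translations.
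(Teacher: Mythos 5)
The paper itself does not prove this statement: it is quoted as the classical Khintchine--Jarn\'{\i}k theorem, with the reader referred to the cited literature, so there is no in-paper argument to compare yours against. That said, your proposal is essentially the standard modern derivation found in those references and it is sound. For the convergence half, the natural cover has $f$-cost comparable to $\sum_{q\ge Q} q^{k} f(2\psi(q))$ rather than $\sum_{q\ge Q} q^{k} f(\psi(q))$, and for a general dimension function these need not be comparable; here is where the hypothesis that $r^{-k}f(r)$ is monotonic earns its keep (if it is decreasing then $f(2r)\le 2^{k}f(r)$, while if it is increasing then $f(r)\ll r^{k}$ near $0$, so either $f(r)\asymp r^k$ and the claim reduces to Khintchine's theorem, or $\hf(\I^k)=0$ and both cases are trivial). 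You should make that step explicit. For the divergence half, invoking the Beresnevich--Velani Mass Transference Principle with $\tilde\psi(q)=f(\psi(q))^{1/k}$ is exactly the intended route: $\tilde\psi$ is decreasing, so the simultaneous Khintchine theorem applies, and the monotonicity of $r^{-k}f(r)$ is precisely the MTP hypothesis, as you say. Two small checks worth recording: the MTP hypothesis asks for full Lebesgue measure of the limsup of the dilated balls in \emph{every} ball of $\R^k$, not just in $\I^k$, which follows from the integer-translation invariance of the $\tilde\psi$-approximable points; and the MTP requires the radii $\psi(q)$ to tend to $0$, the contrary case being trivial since then $W_k(\psi)=\I^k$ while the sum necessarily diverges.
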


\medskip

\noindent This theorem unifies the fundamental  results of
Khintchine and Jarn\'{\i}k in the classical theory of metric
Diophantine approximation. Khintchine's Theorem (1924) corresponds
to the situation in which $f(r)=r^k$ when $\hf$ is equivalent to
$k$-dimensional Lebesgue measure.   Jarn\'{\i}k's Theorem (1931)
corresponds to the situation in which $r^{-1} \, f(r) \to\infty$
as $r\to0$ and $r^{-k} \, f(r) $ is decreasing in which case $\hf(\I^k) =
\infty$.  For  background and further details see \cite{Beresnevich-Bernik-Dodson-Velani-Roth, HDSV97} and references therein.

For all $\tau > 0$, let $\psi_\tau$ be the `approximating' function given by
$
\psi_\tau(q) :=   \exp(-q^\tau  )  \,  .
$
Then by definition, when $k=1$  the  corresponding set $ W_k(\p_\tau)$  is  a  subset of the set of Liouville numbers which is well-known to be of Hausdorff  dimension zero.  In fact  $ \dim W_k(\p_\tau)= 0$ for all positive integers $k$.  To see this, note that for any dimension function  $f_s(r) = r^s \ (s> 0)$,
$$
\sum_{q=1}^{\infty}  \ \; q^k \,
f_s\left(\p_\tau(q)\right)  \ =  \ \sum_{q=1}^{\infty}  \
 \exp\big(-(s \,  q^{\tau}  - k \log q )   \big)
\  <  \ \infty \,
$$
for all $\tau >0$ and $k \in \R$.   Hence, it   follows from the Khintchine-Jarn\'{\i}k Theorem  and the definition of Hausdorff dimension that $ \dim   W_k(\p_\tau) = 0 $ for all $\tau > 0 $ and $ k \ge 1 $.  The upshot of this is that by \eqref{triv}, for all $\theta \in[0,\pi)$
$$\dim \big(\proj_{\theta} \, W_2(\p_\tau) \big) = 0 $$
and Marstrand's Theorem is not particularly informative.  The problem is that the dimension functions $f_s$ given by  $f_s(r)=r^s$ are not delicate enough  to differentiate between sets of dimension zero.   Instead, for $s >0$  consider
the logarithmic dimension function $f_s$ given by $f_s(r)=
(-\log r)^{-s}$ for $0<r<1$.  Then, for $ \tau > 0 $ and $ k \ge 1$, it is easily verified that
$$
\sum_{q=1}^{\infty}  \ \; q^k \,
f_s\left(\p_\tau(q)\right)  \ =  \ \sum_{q=1}^{\infty}  \
 q^{- (\tau s  -   k )    }    \ \ \
 \left\{\begin{array}{cl} < \infty
& {\rm \ if} \;\;\; s  > s_0 \; \\[1ex]
= \infty  & {\rm \ if} \;\;\; s  < s_0          \;
\end{array}\right.  ,
 $$
where
$$
s_0 := \frac{k+1}{\tau} \ .
$$
It then  follows from the Khintchine-Jarn\'{\i}k Theorem  and the definition of  logarithmic Hausdorff dimension, see \S\ref{mainlogsec}, that $     \dim_{\rm \log}    W_k(\p_\tau) = s_0 $ for all $\tau > 0 $ and $ k \ge 1 $. In turn Theorem \ref{logResult} implies the non-trivial statement that for almost all $\theta \in[0,\pi)$,
$$
\dim_{\rm \log} \big({\proj}_{\theta} \   W_k(\p_\tau)   \big)= s_0   \, . $$

\subsection{The main result  \label{mainresultsec}}
We first recall the definition of  $f$-dimensional Hausdorff measure.  Let  $f: \R^{+} \rightarrow \R^{+}$  be a {\em dimension}  or {\em gauge function}, that is a function
that is increasing and
continuous  with $f(r)\to 0$ as $r\to 0 \, $.
Let $A$ be  a non--empty subset of $\R^n$. For $\rho > 0$,
let
$$
\cH^{f}_{\rho} (A) \, := \, \inf \Big\{ \sum_{i} f(|U_i|)  : A \subset \bigcup_{i} U_{i} ,\ |U_i| \leq\rho \Big\} \, ,
 $$
 where $|U|$ denotes the diameter of a set $U$ and the infimum is over countable covers $\{U_{i}\}$ of $A$ by sets of diameter at most $\rho$.  The {\em
Hausdorff $f$-measure}  of $A$ is defined by
$$ \cH^{f}
(A) := \lim_{ \rho \rightarrow 0} \cH^{f}_{\rho} (A)   \;.
$$
When  $f(r) = r^s$ ($s > 0$), the measure $ \hf$ is the usual {\em $s$-dimensional Hausdorff measure}\/ $\hs $.

%Let $A$ be  a non--empty subset of $\R^n$. For $\rho > 0$, a countable collection $
%\left\{B_{i} \right\} $ of balls in $\R^n$ with radius $r_i \leq
%\rho $ for each $i$ such that $A \subset \bigcup_{i} B_{i} $ is
%called a {\em $ \rho$-cover}\/ for $A$. Let
%$$
%\cH^{f}_{\rho} (A) \, := \, \inf \Big\{ \sum_{i} f(r_i)  : \{
%B_{i} \}  \mbox{\rm \ is\ a\  $\rho$-cover\ of\ } A \Big\} \, ,
% $$
 %where the infimum is over all $\rho$-covers.  The {\em
%Hausdorff $f$-measure}  of $A$ is defined by
%$$ \cH^{f}
%(A) := \lim_{ \rho \rightarrow 0} \cH^{f}_{\rho} (A)   \;.
%$$

We will also use centred Hausdorff measure. Here we consider covers by a countable collection of balls $\left\{B(x_i, r_i)\right\} $ of radii $r_i \leq \rho $ with centres in $A$. Thus, for $\rho>0$ we set
$$
\cH^{f}_{C,\rho} (A) \, := \, \inf \Big\{ \sum_{i} f(r_i)  :  A \subset \bigcup_{i} B(x_i, r_i),\ x_i \in A, \ r_i \leq\rho \Big\} \, ,
 $$
and define the {\em centred Hausdorff $f$-measure}  of $A$ by
$$ \cH_C^{f}
(A) := \lim_{ \rho \rightarrow 0} \cH^{f}_{C,\rho} (A)   \;.
$$
These two measures are equivalent, in the sense that for all $A \subset \R^n$
 \begin{equation}  \label{mesequiv}
 \cH_C^{f}(A) \leq  \cH^{f}(A) \leq m_n\, \cH_C^{f}(A),
\end{equation}
where $m_n$ depends only on $n$. This follows easily from the definitions, noting that every set $U$ that intersects $A$ is contained in a ball with centre in $A$ and diameter $|U|$, and that every ball $B\subset \R^n$ of radius $r$ is contained in a finite number $m_n$ of balls of radius $\frac12 r$, that is diameter $r$; in particular $m_2=7$.

Note that  $f$-Hausdorff measure only depends on $f(r)$  for $r\in [0,r_0]$ for arbitrarily small $r_0$, so changing the dimension function $f$ outside a neighbourhood of $0$ does not affect the measure.

The {\em Hausdorff dimension}  $\dim A$ of a set $A$ is
defined by $$ \dim  A \, := \, \inf \left\{ s : \cH^{s}
(A) =0 \right\}   \, =     \,   \, \sup \left\{ s : \cH^{s}
(A) =\infty \right\} .  $$
It follows from \eqref{mesequiv} that we get the same value for Hausdorff dimension if we replace $\cH^{s}$ by  $\cH_C^{s}$ in this definition. For further discussion of Hausdorff measures and dimensions, see \cite{F,Mat, Rog}.
%When $s$ is an integer $\hs$ is a constant multiple of $s$--dimensional Lebesgue measure on $\R^s$.

Defining Hausdorff measures for general dimension functions allows a more precise notion of dimension than just a numerical value. For example, a set $A$ may have Hausdorff dimension $s$ but with $ \cH^{s}(A) = 0$. However, it may be that $0 < \cH^{f}(A) <\infty$ where, say $f(r) = r^s\log(1/r)$,  in which case we think of $A$ having dimension `logarithmically smaller' than $s$. Introducing a partial order $\prec$ on the set of dimension functions by $f\prec g$ if $\lim_{r\to 0} g(r)/f(r) =0$, which implies that $ \cH^{g}(A) = 0$ whenever $ \cH^{f}(A)< \infty$, allows a much finer notion of dimension,  see \cite{Rog}. It is also worth noting that there are sets $A\subset \R^n$ for which there
is no dimension function $f$ such that $0 < \cH^{f}(A) <\infty$, see \cite{Dav}.

\medskip

In order to state our main theorem we need the notion of doubling.  A dimension function $f$ is said to be {\em doubling} if there exist constants $c>1 $ and $r_0 > 0$ such that
\begin{equation}  \label{double}
f(2r) \leq cf(r)    \quad  \forall \,  0< r < r_0 \, .
\end{equation}

\noindent The number $c$ is called a {\em doubling constant}. Note that   if $f$ is given by $f(r) = r^s$ ($s > 0$) then $f(2r) = 2^s f(r) $  and so $c = 2^s$ is a doubling constant  for $f$.

We are now in the position to state our main result.

%\begin{theorem}\label{mainthm} Let $A\subset \R^2$ be a Borel set.  Let $f$ and $g$ be dimension functions such that  \begin{equation} \label{(1)}
% -\int_{0}^{1}\!f(r) \ \mathrm{d \!}\left(\frac{1}{g(r)}\right) <\infty  \, .
%\end{equation}
%Furthermore, assume that $g$ is  doubling  with constant  $c<2$. If
%$\cH^f(A) =0$ and $\cH^g(A) = \infty $,  then $\cH^f(\proj_{\theta}A) =0$ and $\cH^g(\proj_{\theta}A) = \infty $ for  almost all $\theta\in[0,\pi)$.
%\end{theorem}

\begin{theorem}\label{mainthm} Let $A\subset \R^2$ be a Borel set.
~\vspace*{-2ex}
\begin{itemize}
 \item[(i)]  Let $f$ be a dimension function. Then $\cH^f(\proj_{\theta}A) \le \cH^f(A)$  for all $\theta \in[0,\pi)$. In particular if  $\cH^f(A) =0$ then $\cH^f(\proj_{\theta}A) = 0$ for all $\theta \in[0,\pi)$.
  \item[(ii)] Let $f$ be a dimension function such that  $\cH^f(A) > 0 $. Suppose  $g$ is a dimension function that is  doubling  with constant  $c<2$ and such that
  \begin{equation} \label{(1)}
 -\int_{0}^{1}\!f(r) \ \mathrm{d \!}\left(\frac{1}{g(r)}\right) <\infty  \, .
\end{equation}
 %and suppose $g$ is  doubling  with constant  $c<2$.
 Then,  $\cH^g(\proj_{\theta}A) = \cH^g(A) = \infty $ for  almost all $\theta\in[0,\pi)$.
\end{itemize}

\end{theorem}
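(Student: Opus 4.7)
Part (i) will be immediate from the $1$-Lipschitz property \eqref{lippy} of $\proj_\theta$: a $\rho$-cover $\{U_i\}$ of $A$ induces a $\rho$-cover $\{\proj_\theta U_i\}$ of $\proj_\theta A$ with $|\proj_\theta U_i|\le|U_i|$, and the monotonicity of $f$ preserves the inequality $\sum f(|\proj_\theta U_i|)\le\sum f(|U_i|)$; letting $\rho\to0$ gives $\cH^f(\proj_\theta A)\le\cH^f(A)$.

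For part (ii) I plan to run a Kaufman-style potential-theoretic argument, adapted to the dimension function $g$. First, using the generalised Frostman lemma together with standard reductions, I will pass to a compact $E\subseteq A$ with $0<\cH^f(E)<\infty$ and obtain a non-zero measure $\mu$ supported on $E$ such that $\mu(B(x,r))\le K\,f(r)$ for all $x$ and $r$. Setting $\mu_\theta=(\proj_\theta)_*\mu$ and writing $I_g(\mu_\theta)=\iint d\mu_\theta(s)\,d\mu_\theta(t)/g(|s-t|)$ for its $g$-energy, the target estimate is
$$\int_0^\pi I_g(\mu_\theta)\,d\theta \;=\; \iint d\mu(x)d\mu(y)\int_0^\pi \frac{d\theta}{g(|(x-y)\cdot e_\theta|)} \;<\;\infty,$$
where $e_\theta$ is a unit vector along $L_\theta$. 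Putting $R=|x-y|$ and substituting $u=R|\cos\theta|$ reduces the inner integral to $2\int_0^R du/[g(u)\sqrt{R^2-u^2}]$. Splitting at $u=R/2$: on $[R/2,R]$ doubling yields $g(u)\ge g(R)/c$ and $\int du/\sqrt{R^2-u^2}$ is bounded, while on $[0,R/2]$ one has $\sqrt{R^2-u^2}\ge R/\sqrt 2$, so the problem reduces to bounding $\int_0^{R/2}du/g(u)$. This is where the hypothesis $c<2$ is decisive: a dyadic decomposition into intervals $[R/2^{n+1},R/2^n]$ combined with the iterated doubling bound $g(R/2^{n+1})\ge g(R)/c^{n+1}$ produces a geometric sum $\sum_{n\ge1}(c/2)^n$, which converges \emph{exactly} when $c<2$ and gives $\int_0^{R/2}du/g(u)\lesssim R/g(R)$. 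Combining, $\int_0^\pi d\theta/g(R|\cos\theta|)\lesssim 1/g(R)$, so the mean $g$-energy is dominated by $\iint d\mu(x)d\mu(y)/g(|x-y|)$. A layer-cake identity rewrites the inner integral as $\int_0^\infty\mu(B(x,r))(-d(1/g(r)))$, and the Frostman bound on $\mu$ makes it at most $K\int_0^\infty f(r)(-d(1/g(r)))$, which is finite by hypothesis \eqref{(1)} (the tail $r\ge 1$ is harmless since $-d(1/g)$ has bounded total variation there).

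Having established $\int_0^\pi I_g(\mu_\theta)d\theta<\infty$, Fubini gives $I_g(\mu_\theta)<\infty$ for ${\mathcal L}$-a.e.\ $\theta$, and an Egorov-type truncation of the potential $U(s)=\int d\mu_\theta(t)/g(|s-t|)$ to a sublevel set $\{U\le N\}$ of positive $\mu_\theta$-mass yields a non-trivial submeasure that is uniformly $g$-Frostman, whence $\cH^g(\proj_\theta E)>0$ via the mass distribution principle. To promote this to $\cH^g(\proj_\theta A)=\infty$, I will first note that \eqref{(1)} forces $f(r)/g(r)\to0$ as $r\to0$ (otherwise $f\ge\varepsilon g$ near $0$ would make $\int f(-d(1/g))$ diverge against a measure of infinite total variation), which by routine cover-comparison gives $\cH^g(A)=\infty$ from $\cH^f(A)>0$, and then construct via a de la Vall\'ee Poussin-type argument a slightly smaller dimension function $g'$ with $g'(r)/g(r)\to 0$, still doubling with a constant $<2$, and still satisfying \eqref{(1)}; applying the preceding steps to $g'$ gives $\cH^{g'}(\proj_\theta A)>0$, which, since $g'(r)/g(r)\to 0$, forces $\cH^g(\proj_\theta A)=\infty$ by the same cover comparison. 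The principal technical hurdle will be the angular-integral estimate in the middle step, where $c<2$ is precisely the sharp condition making the dyadic geometric series converge; the remainder consists of standard Marstrand--Kaufman machinery adapted to general gauges.
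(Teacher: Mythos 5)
Your argument is correct and is, at its core, the same Kaufman-style potential-theoretic proof as the paper's: a generalised Frostman lemma produces $\mu$ with $\mu(B(x,r))\le K f(r)$, the doubling constant $c<2$ controls the angular average of the $g$-energies of the projections, and condition \eqref{(1)} makes the $g$-potential of $\mu$ finite. Two points of comparison. First, your treatment of the inner angular integral (substitution $u=R|\cos\theta|$ plus a dyadic decomposition in which $c<2$ makes $\sum_n (c/2)^n$ converge) is an equivalent computation to the paper's, which instead converts doubling into the exponent form $g(\lambda r)\ge c^{-1}\lambda^{s}g(r)$ with $s=\log_2 c<1$ (Lemma \ref{double=}) and uses $\int_0^\pi |\cos(\phi-\theta)|^{-s}\,\mathrm{d}\theta<\infty$; both hinge on exactly the same use of $c<2$. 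Second, and more substantively, your final step is heavier than necessary and is the one place where something is asserted rather than proved: from $I_g(\mu_\theta)<\infty$ you only extract $\cH^g(\proj_\theta E)>0$ via a fixed-level truncation of the potential, and you then propose to upgrade to $\infty$ by manufacturing an auxiliary gauge $g'$ with $g'/g\to 0$ that is still doubling with constant $<2$ and still satisfies \eqref{(1)} against $f$. Such a $g'$ can indeed be built (a de la Vall\'ee Poussin-type argument with sparse, slowly accumulating jumps works), but you would need to verify all three properties simultaneously, and none of this is needed: the sharper standard fact -- the paper's Proposition \ref{prop6} -- is that a nonzero measure of finite $g$-energy on a Borel set forces that set to have \emph{infinite} $\cH^g$-measure, because the Egorov truncation can be run with an arbitrary level $\varepsilon>0$ (yielding $\mu(B(x,r))\le\varepsilon g(r)$ on a set of measure $>\tfrac12$) and then letting $\varepsilon\to 0$ against a centred cover. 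Using that statement, $I_g(\mu_\theta)<\infty$ for almost all $\theta$ gives $\cH^g(\proj_\theta A)=\infty$ directly, and together with your (correct) observation that \eqref{(1)} forces \eqref{sv1} and hence $\cH^g(A)=\infty$, the proof closes without the $g'$ detour.
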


\medskip

Several remarks are in order.

\medskip

\begin{rem}  \label{part1}
Part (i) of Theorem \ref{mainthm} is an immediate consequence of the Lipschitz condition \eqref{lippy}  and the definition of $\cH^f$, see \cite[Proposition 3.1]{F} where the case of $f(r) = r^s$ is given. Thus the main substance of the theorem is part (ii) when  $  \cH^f(A) >  0 $.

%The fact that $\proj_{\theta}$ satisfies the Lipschitz condition \eqref{lippy} leads immediately to a  measure version of the  dimension statement  \eqref{triv}.
%Let $A\subset \R^2$ be a Borel set and $f$ be a dimension function.  Then, for every $\theta \in[0,\pi) $,  \begin{equation}  \label{lippymeasure}
%  \cH^f(\proj_{\theta}A) \le \cH^f(A)       \, .
%\end{equation}

%\noindent This is essentially a consequence of \cite[Proposition 2.2]{F} -- the $s$-dimensional Hausdorff measure proof of \cite[Proposition 2.2]{F} can be easily adapted to  general dimension functions $f$ and yield \eqref{lippymeasure}.  The upshot of this remark is that Theorem~\ref{mainthm} can be viewed as an attempt to determine conditions under which  equality holds in \eqref{lippymeasure}.   Clearly,  this is so if $  \cH^f(A) = 0 $  and this is precisely the content of  part (i) of Theorem \ref{mainthm}.  Thus the main substance of the theorem is case (ii) when  $  \cH^f(A) >  0 $.
\end{rem}

\medskip

\begin{rem} \label{rem2} Note that the conclusion of (ii) remains true if the range of integration in  \eqref{(1)} is an interval $[0, r_0]$ for any $r_0>0$. Moreover, if $g$ is differentiable, or at least differentiable except at finitely many points, then
\begin{equation}\label{intcond}
- \int_{0}^{1}\!f(r) \ \mathrm{d \!}\left(\frac{1}{g(r)}\right) =  \int_{0}^{1}\!f(r) \frac{g'(r)}{g^2(r)} \ \mathrm{d} r   \ .
\end{equation}
In particular, if $f$ and $g$ are dimension functions satisfying \eqref{(1)} then
\begin{equation}  \label{sv1} \lim_{r\to 0} \frac{f(r)}{g(r)}=0.\end{equation}
For suppose not. Then there exists $a>0$ and a sequence $r_n\searrow 0$ such that $f(r_n)/g(r_n)\geq a$ for all $n$. Let $r_n'>r_n$ be the least number such that $g(r_n') = 2g(r_n)$; such an $r_n'$ exists by continuity and monotonicity of $g$ provided that the sequence is chosen taking $r_1$ sufficiently small. Then
$$
\int_{r_n}^{r_n'}\!f(r) \frac{g'(r)}{g^2(r)} \ \mathrm{d} r
\ \geq \ \int_{r_n}^{r_n'}\! \frac{f(r_n)}{2g(r_n)}  \frac{g'(r)}{g(r)}\ \mathrm{d} r
\ \geq \ {\textstyle \frac{1}{2}} a \log \bigg[\frac{g(r_n')}{g(r_n)}\bigg]
\ = \ {\textstyle \frac{1}{2}} a \log 2.
$$
Since $0<r_n <r_n' \to 0$, the integrals in \eqref{intcond} and \eqref{(1)} cannot be finite.

%Indeed, the limit in question exists because $f,g$ are continuous functions on $\R^{+}$ and  assume for the moment that it equal to $a >0$.  Then, there exists some $r_0 >0$  so that   $f(r)>\frac{a}{2} \, g(r)$ for all $0<r<r_0$ and it  follows that
%\begin{eqnarray*}
%-\int_{0}^{r_0}f(r) \ \mathrm{d \!}\left(\frac{1}{g(r)}\right) & > & \int_{0}^{r_0}\frac{a}{2} \ g(r) \ \mathrm{d \!}\left( - \frac{1}{g(r)}\right) \\%[1ex]
%&=& \frac{a}{2} \  \int_0^{r_0}1 \ \mathrm{d \!}\left( \ln g(r) \right)  \, =  \, \infty \, .  \\[1ex]\end{eqnarray*}
%This  contradicts the assumption that condition  \eqref{(1)} holds and so we must have that $a=0$.

When contemplating an extension of  Marstrand's theorem to general dimension functions, it is not unreasonable to suspect
a  statement along the lines of Theorem \ref{mainthm}  with condition \eqref{(1)}  replaced by \eqref{sv1}.     The latter condition is natural and it initially appears to avoid known examples (see for instance   \cite[9.2 Example]{Mat})  of $s$-sets $A  \subset  \R^2$  with $0< s\le 1$  for which  $\cH^s(\proj_{\theta}A) = 0 $ for  all $\theta\in[0,\pi)$.
%\comdavid{A slightly more detailed analysis of Martin and Mattila's original example shows that it in fact satisfies $\cH^g(A) = 0$ for some function $g$ satisfying \eqref{sv1}, without requiring any modifications to their construction. However, it is necessary to modify their construction as is done in the appendix to get an optimal value for $g$.}
As we shall see in the appendix  the  construction of these sets can be adapted to show that we can not in general replace condition \eqref{(1)}  by \eqref{sv1} in Theorem \ref{mainthm}.  The following statement is easily deduced from the theorem   proved in the appendix for codoubling dimension function.   A dimension function $f$ is said to be {\em codoubling} if there exist constants $c>1 $ and $r_0 > 0$ such that
\begin{equation}  \label{codoublesv}
f(2r) \geq c \, f(r)    \quad  \forall \,  0< r < r_0 \, .
\end{equation}

\vspace{1mm}

\begin{thdetprime}
Let $f,g$ be dimension functions such that $f$ is doubling with constant  $c\le 2$ and codoubling, and such that
\begin{equation} \label{fgrelation1}
g(r) \leq M\ f\left(r\log(r^{-1})\right)    \quad  \forall \,  0< r < r_0
\end{equation}
where $r_0 > 0 $ and $ M>0$ are  constants. Then there exists a set $A\subseteq \R^{2}$ with $0< \cH^{f}(A) < \infty $ but $\cH^g(\proj_{\theta}A) = 0$ for all $0\leq \theta < \pi$.
\end{thdetprime}

It is easily seen that  Theorem A$'$ is not a  converse to Theorem \ref{mainthm}. Namely, fix some $ \delta \in (0,1) $  and for $ s > 0$ consider dimension functions $g_{s}$   given by
\[
g_{s}(r) = r^{\delta} \log^s(r^{-1})  \, .
\]
Clearly, $g_s$ is doubling with constant $c < 2$.  Also, let $f$ be  given by $f(r)=r^{\delta}$.  Clearly, $f$ is doubling with constant $c \le 2$ and codoubling. Then for any  $s \leq \delta$,  condition \eqref{fgrelation1} is satisfied  and Theorem~A$'$   implies that there exists a set $A_{s}$ with positive $\delta $-dimensional Hausdorff measure such that $\cH^{g_{s}}(\proj_\theta A_{s}) = 0$ for all $0\leq \theta < \pi$. On the other hand, for  any $s \geq 1$, condition \eqref{(1)} is satisfied and Theorem \ref{mainthm} implies that there does not exist such a set $A_{s}$.  But for $s \in (\delta,1)$, we do not know whether such a set exists.  Nevertheless, Theorem A$'$  shows that  we can not in general replace condition \eqref{(1)}  by \eqref{sv1}  and thus there is a gap of uncertainty associated with Theorem~\ref{mainthm} where we do not know what happens (for further discussion see \S\ref{fincomments}).   This gap of uncertainty will be explicitly highlighted in \S\ref{inad} when we return to our motivating example.  The fact that condition \eqref{(1)}   shows up is very much a consequence of the approach taken to prove the theorem.   Concerning  this the reader is especially directed  towards Remark \ref{gap}  at the end of \S\ref{enca} in the proof.

\noindent  Not surprisingly, condition \eqref{fgrelation1} implies that  the integral
convergence condition \eqref{sv1} is violated -- see the appendix for the details; namely Remark \ref{showingit}.

%\comsanju{OLD VERSION It would be interesting to know whether or not it is  possible  to replace \eqref{(1)}   by \eqref{sv1} in the statement of the theorem.   The fact that condition \eqref{(1)}  shows up is very much a consequence of the approach taken to prove the theorem.   Concerning  this the reader is especially directed  towards Remark \ref{gap}  at the end of \S\ref{enca} in the proof.}

\end{rem}

\medskip

\begin{rem} \label{rem3}  It is easily verified that if $f$ and $g$ are dimension functions satisfying \eqref{sv1} and $\cH^f(A) >0  $ then $\cH^g(A) = \infty $.  Thus,   the main substance of part (ii) of  Theorem \ref{mainthm} is the statement that $\cH^g(\proj_{\theta}A)  \ge \cH^g(A) $ for  almost all $\theta\in[0,\pi)$. For further relations between measures with respect to different gauge functions, see \cite[Section 4]{Rog}.
 \end{rem}

\medskip

\begin{rem} \label{ohyes} Regarding the dimension function $g$,  the condition  that $c < 2$  on the doubling constant  is necessary.   To see this, we derive the dimension aspect of Marstrand's Theorem from our result.  With this in mind,  assume without loss of generality that $\dim A > 0  $ and let $s_1,s_2$ be arbitrary  real numbers satisfying  $ 0< s_1 <  s_2 <  \dim A $.  Now let $g$ and $f$ be dimension functions given by $ g(r) := r^{s_1} $  and $ f(r):=r^{s_2}$.    It follows from the definition of Hausdorff dimension that $\cH^{s_1}(A) = \cH^{s_2}(A) = \infty$.   Also it is easily checked that condition \eqref{(1)} is satisfied  and thus, modulo the condition on the doubling  constant,  part (ii) of Theorem~\ref{mainthm} implies that $\cH^{s_1}({\proj}_{\theta}A) =\infty$  for almost all $\theta \in[0,\pi)$.   In turn, it follows (from the definition of Hausdorff dimension) that
\begin{equation}\label{triv2}
\dim {\proj}_{\theta}A  \ge s_1   \end{equation}
for almost all $\theta \in[0,\pi)$. The application of Theorem \ref{mainthm}
is legitimate  as long as the doubling constant $c= 2^{s_1}$ associated with $g$ %(see Remark \ref{rem1})
satisfies  $s_1 < 1$.   Now with reference to \eqref{triv2} this restriction on $s_1$ makes perfect sense since $\dim {\proj}_{\theta}A  \le 1 $ regardless of the size of  $A$.  By continuity, we can replace $s_1$ in  \eqref{triv2} by  $\dim A $. The complementary  upper bound can easily be deduced via part (i) of Theorem~\ref{mainthm} but  inequality \eqref{triv} gives it directly.
 \end{rem}

 \medskip

\begin{rem} Even  if $\cH^f(A) = \infty  $, the conclusion of  part (ii) of Theorem \ref{mainthm} is not in general valid for the dimension function $f$. Indeed, if  $f$ is given by $f(r):=r$ so that  $\cH^f$ is simply $1$-dimension Lebesgue measure,  it is known \cite[Section 6.4]{F} that there are  sets $A$ for which  $\cH^f(A) > 0 $ but $\cH^f(\proj_{\theta}A) = 0 $ for  almost all $\theta\in[0,\pi)$.
 \end{rem}

\medskip

As alluded to in Remark \ref{exceptremark}, in \S\ref{exceptangles}  we will  investigate  the size of the set of exceptional angles $\theta$ for which the conclusion of  part (ii) of Theorem \ref{mainthm} fails.   In short, by replacing the integral convergence  condition \eqref{(1)} by a suitable rate of   convergence condition we are able to conclude that the exceptional set of $\theta\in[0,\pi) $ for which  $\cH^g(\proj_{\theta}A)<\infty$  is of  $\cH^f $-measure 0, see Theorem \ref{p3}   for the precise statement.

\subsubsection{The logarithmic  dimension result   \label{mainlogsec}}

In terms of dimension theory,  when we are confronted with
sets of Hausdorff dimension 0 it is natural to change the usual
`$r^s$-scale' in the definition of Hausdorff dimension to a
logarithmic scale. For $s
> 0$, let $f_s$ be the dimension function given by
$f_s(r):=
(-\log^{*} \! r)^{-s}$, where
 $$
\log^*\! r:=\left\{\begin{array}{ll}  \log r &\mbox{for
} r \in (0,\frac12)  \\[1ex]
\log \frac12 &\mbox{for }r\ge \frac12 \, .
\end{array}\right.
$$
[The form of $\log^{*} \! r$ for $r\geq \frac12$ is to ensure that it is defined for $r\geq 1$; as remarked earlier, the particular form for $r\geq \frac12$ is of no consequence.]
 The {\em logarithmic Hausdorff dimension}  $\dim_{\rm \log} A$     of a set
$A$  is given by
\begin{equation}\label{logdim}
\dim_{\rm \log}  A  :=  \inf
\left\{ s : \mathcal{ H}^{f_s} (A) =0 \right\} = \sup \left\{ s :
\mathcal{ H}^{f_s} (A) = \infty \right\} \, .
\end{equation}
 It is easily verified
that if $ \dim A > 0 $ then $ \dim_{\rm \log}  A =
\infty$, precisely as one would expect.

Armed with Theorem \ref{mainthm} it is straightforward to prove  Theorem \ref{logResult}.

\medskip

\noindent{\em Proof of Theorem \ref{logResult}.  }
Part (i) is immediate from Theorem \ref{mainthm}(i)  and \eqref{logdim}.

For part (ii), without loss of generality, assume  that $\dim_{\rm \log}A  > 0  $ and let $s_1,s_2$ be  real numbers satisfying  $ 0< s_1  < s_2 < \dim_{\rm \log}A  $.  Let $g$ and $f$ be dimension functions given by $ g(r) := ( -\log^{*} \! r)^{-s_1} $  and $ f(r):=( -\log^{*} \! r)^{-s_2} $.       It  follows from \eqref{logdim}  that $\cH^{g}(A) = \cH^{f}(A) = \infty$.   It is easily verified that  condition \eqref{(1)} is satisfied and that
%for each  $s_1 >0$, there exists $r_0=r_0(s_1) > 0$ such that
$g$ is doubling with  constant $c < 2$ for $r\in (0,r_0)$ for some $r_0$. Thus, Theorem \ref{mainthm}(ii)  implies that $\cH^{g}({\proj}_{\theta}A) =\infty$  for almost all $\theta \in[0,\pi)$.   In turn, it follows from \eqref{logdim} that for almost all $\theta \in[0,\pi)$, $ \dim_{\rm \log} {\proj}_{\theta}A \ge s_1   $   and  thus  $ \dim_{\rm \log} {\proj}_{\theta}A \ge \dim_{\rm \log}A$.
\hfill $\Box$

\medskip

\subsubsection{Explicitly exposing the  gap of uncertainty \label{inad}}

With reference to our motivating example,  for all $\tau > 0$ let $\psi_\tau$ be the `approximating' function given by
$$
\psi_\tau(q) :=   q^{-\tau} ( \log q)^{-\tau}    \qquad  (q\geq 1).
$$
It follows, via  the Khintchine-Jarn\'{\i}k Theorem and the definition of  Hausdorff dimension,  that for all $ \tau \ge  (1+k)/k$
$$
\delta= \delta(\tau) := \dim W_k(\psi_\tau)  =  \frac{k+1}{\tau}  \, . $$  In fact, the Khintchine-Jarn\'{\i}k Theorem  implies  a much   finer conclusion.  Fix $ \tau >  (1+k)/k$  and  consider the family of dimension functions $(f_{\delta,s})_{s>0}$  given by
$$ f_{\delta,s}(r):= r^{\delta}  (-  \textstyle{\frac{1}{\tau}} \log^{*} \! r)^{s}  \, . $$
It is easily verified that
$$\sum_{q=2}^{\infty}  \ \; q^k \,
f_{\delta,s}\left(\p_\tau(q)\right)   \, \asymp \,   \sum_{q=2}^{\infty}  \ \; \frac{1}{q (\log q)^{1+k-s}}, $$
in the sense that the series either both converge or diverge,
and so the Khintchine-Jarn\'{\i}k Theorem  implies that
$$\mathcal{ H}^{f_{\delta,s}}\left(W_k(\p_\tau)\right)=\left\{\begin{array}{cl} 0
& {\rm \ if} \;\;\; s < k  \; ,\\[2ex]
\infty  & {\rm \ if} \;\;\; s \ge k         \; .
\end{array}\right.$$
Loosely speaking,  the set  $ W_k(\p_\tau) $  has ``$ \delta(\tau)$-logarithmic dimension'' equal to $k$.

\vspace*{3ex}

Now  let $k=2$ and with reference to Theorem \ref{mainthm}, put $f= f_{\delta,2} $ and $g=f_{\delta,s}$.   Suppose  that $ \tau >  3$ so that  $\delta(\tau) < 1$.  This ensures that $g$ is doubling with constant $c < 2$.  Theorem \ref{mainthm}  then implies that for  almost all $\theta\in[0,\pi)$
$$\mathcal{ H}^{f_{\delta,s}}\left( \proj_{\theta} \, W_2(\p_\tau)\right)=\left\{\begin{array}{cl} 0
& {\rm \ if} \;\;\; s < 2  \; ,\\[2ex]
\infty  & {\rm \ if} \;\;\; s > 3         \; .
\end{array}\right.$$
Of course,  part (i) of Theorem \ref{mainthm} implies that the  zero measure statement associated with  $s<2$ is true for all $\theta\in[0,\pi)$.  Regarding the application of part (ii),  we need that $s > 3$ in order  to satisfy the  integral convergence  condition \eqref{(1)}.   Thus the latter gives rise to a gap of uncertainty; namely   $ s \in (2,3)$ in the specific example under consideration.   We suspect that the infinity  measure statement for  $s>3$  is actually true for $s> 2$.

 \vspace*{1ex}

 {\em Problem: }    Show that  $\mathcal{ H}^{f_{\delta,s}}\left( \proj_{\theta} \, W_2(\p_\tau)\right)=   \infty $   if  $ s >  2  $.

\vspace*{1ex}

\noindent The fact of the matter is that it is highly unlikely that any set  $  W_2(\p)  $ of simultaneously
$\p$--approximable points  will have the necessary `dense rotational' structure that underpins the  construction of the sets associated with Theorem A$'$.

%\comsanju{OLD VERSION \noindent This would indeed be situation if we could replace condition \eqref{(1)} by condition \eqref{sv1} in the statement of Theorem \ref{mainthm}  -- see Remark \ref{rem2}.}
%

\section{Proof of main result}

Our proof of Theorem \ref{mainthm}  will  follow  Kaufman's  potential theoretic proof  \cite{K} of Mastrand's Theorem.  We adapt the proof that he gave for the specific  functions  $f(r)= r^s$ $(s > 0) $ to general dimension functions.

\subsection{Preliminaries:   doubling revisited and Frostman}  \label{prel}

We start by stating an equivalent form of the doubling condition  \eqref{double}.

\begin{lemma}  \label{double=} Let  $f$ be a dimension function.  Then $f$ is doubling if and only if there  exist constants $s >0 $, $ \kappa >0 $ and $r_1 > 0$ such that
\begin{equation}  \label{double2}
f(r \lambda)   \, \geq   \, \kappa  \; \lambda^s f(r)    \qquad   \forall \  0 <  \lambda < 1  \ \ {\rm and } \ \ 0< r < r_1 \, .
\end{equation}
Moreover,  if $f$ has a doubling constant $c>1$ then  \eqref{double2}  holds with $\kappa = c^{-1} $ and $s=\log_2c$.
\end{lemma}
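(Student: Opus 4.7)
The plan is to prove the two implications separately, starting with the easier reverse direction. Assuming \eqref{double2} holds, I substitute $r' = r\lambda$, so that $r = r'/\lambda$, and rewrite the inequality as
$$f(r'/\lambda) \,\le\, \kappa^{-1}\lambda^{-s} f(r') \qquad (0<\lambda<1,\ 0<r'/\lambda<r_1).$$
Specialising to $\lambda = 1/2$ yields $f(2r') \le (2^s/\kappa)\, f(r')$ for all $r' < r_1/2$, which is precisely the doubling condition \eqref{double} with constants $c = 2^s/\kappa$ and $r_0 = r_1/2$.

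For the forward implication, which is the substantive direction and also the part that pins down the explicit values $\kappa = c^{-1}$ and $s = \log_2 c$, the plan is to iterate the doubling inequality along the dyadic scale and then interpolate by monotonicity. First I would note that \eqref{double} applied $n$ times gives, for every $0<r<r_0$ and every integer $n\ge 0$,
$$f(r)\,\le\, c\, f(r/2)\,\le\, c^2 f(r/4)\,\le\,\cdots\,\le\, c^{n}f(r/2^{n}),$$
(the hypothesis $r<r_0$ propagates automatically since $r/2^k < r_0$ for all $k\ge 0$), which rearranges to $f(r/2^{n}) \ge c^{-n} f(r)$.

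Given any $\lambda\in(0,1)$, pick the unique integer $n\ge 0$ with $2^{-(n+1)} \le \lambda < 2^{-n}$. By monotonicity of $f$ together with the iterated bound above,
$$f(r\lambda)\,\ge\, f\bigl(r/2^{n+1}\bigr)\,\ge\, c^{-(n+1)} f(r).$$
Setting $s := \log_2 c$ so that $2^s = c$, the bound $\lambda < 2^{-n}$ yields $\lambda^s < 2^{-ns} = c^{-n}$, whence $c^{-1}\lambda^s < c^{-(n+1)}$. Chaining these gives $f(r\lambda) \ge c^{-1}\lambda^s f(r)$, establishing \eqref{double2} with $\kappa = c^{-1}$, $s = \log_2 c$, and $r_1 = r_0$.

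There is no real obstacle beyond careful bookkeeping: the dyadic localisation of $\lambda$, the observation that smaller arguments stay in the range where \eqref{double} applies, and the choice $s = \log_2 c$ that makes the geometric progression $c^{-(n+1)}$ match the power $\lambda^s$ up to the factor $c^{-1}$. The only point worth double-checking is the strict versus non-strict direction of the dyadic inequalities, but since we use $\lambda < 2^{-n}$ strictly on the side where we want $\lambda^s$ small, the argument goes through cleanly.
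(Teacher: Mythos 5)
Your proof is correct and follows essentially the same route as the paper: iterate the doubling inequality along dyadic scales, locate $\lambda$ between consecutive powers of $2$, and use monotonicity of $f$ to interpolate, with $\lambda=\tfrac12$ handling the converse. The explicit constants $\kappa=c^{-1}$, $s=\log_2 c$ come out exactly as in the paper's argument.
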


This equivalence is essentially `folklore' and the exponent  $s$ appearing in \eqref{double2} is referred to as the {\em doubling exponent} of $f$.    Nevertheless,  for the sake of completeness we include the short proof.

\begin{proof}

Suppose $f$ has a doubling  constant $c>1$.  For each positive integer $n$,  applying   \eqref{double}  $n$ times gives that
$$ f\left(r\right) \  \le \  c^n \,  f( 2^{-n} r)  \quad   \forall  \ r< 2^{n} r_0 \, . $$
 Put $s:=\log_{2}c>0$.   For each  $0<\lambda <1$, let $m \ge 0$ be the unique integer such that  $2^m\leq \lambda^{-1} < 2^{m+1}$. Then
$$
f(\lambda r) \ge f\left(2^{-(m+1)}r\right)   \ \ge \  c^{-(m+1)} \,  f(r) \ \geq c^{-1} \lambda^{s} f(r)  \, .
$$
For the converse implication simply put $\lambda =\frac12 $ in \eqref{double2}.
\end{proof}

\vspace*{3ex}

The following statement is a generalisation   of Frostman's  fundamental lemma  to  arbitrary dimension functions $f$ .  Throughout,  given a Borel set $ A\subset \R^2$ we  denote by $\cM_1(A)$ the set  of Radon probability  measures $ \mu$ with compact  support in $A$.

\begin{theorem}[Frostman's Lemma]\label{frostman}
Let $A\subseteq \mathbb{R}^2$ be a Borel set and $f$ be a dimension function. Then $\cH^{f}(A)>0$ if and only if there exist a  measure $\mu \in \cM_1(A)$  and a constant $c_1 > 0  $ such that
$$ \mu(B(x,r)) \leq c_1  f(r)    \qquad  \forall \  x\in\R^2    \ \ {\rm and } \ \  r>0 \, . $$
\end{theorem}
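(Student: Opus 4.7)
The plan is to prove the two implications of Frostman's Lemma separately. The $(\Leftarrow)$ direction is the mass distribution principle applied to centred covers: given $\mu\in\cM_1(A)$ with $\mu(B(x,r))\leq c_1 f(r)$, any cover $\{B(x_i,r_i)\}$ of $A$ with $x_i\in A$ and $r_i\leq\rho$ yields
\[
1=\mu(A)\leq\sum_i\mu(B(x_i,r_i))\leq c_1\sum_i f(r_i),
\]
so $\cH^f_{C,\rho}(A)\geq 1/c_1$ for every $\rho>0$, and \eqref{mesequiv} gives $\cH^f(A)\geq 1/c_1>0$. This direction is completely routine and uses no structural hypothesis on $f$.

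The substantive direction is $(\Rightarrow)$, where the plan is to carry out the classical dyadic mass-distribution construction of Frostman. By inner regularity of $\cH^f$ on Borel sets I may assume $A$ is compact, and after translating and rescaling that $A\subset Q_0:=[0,1]^2$. For each integer $N\geq 0$ I build a pre-measure $\mu_N$ supported on dyadic descendants of $Q_0$ as follows. Initialize at level $N$ by placing mass $f(\mathrm{diam}(Q))$ on each dyadic square $Q$ of side $2^{-N}$ meeting $A$ (and zero otherwise). Then walk up the dyadic tree: at each parent $Q'$, if the total mass accumulated inside $Q'$ exceeds $f(\mathrm{diam}(Q'))$, uniformly rescale the masses of all descendants of $Q'$ by a common factor so that the total inside $Q'$ equals exactly $f(\mathrm{diam}(Q'))$. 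The resulting $\mu_N$ satisfies $\mu_N(Q)\leq f(\mathrm{diam}(Q))$ for every dyadic square $Q$ of side $\geq 2^{-N}$.

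The crucial quantitative step is to show that $\mu_N(Q_0)$ is bounded below by a positive multiple of $\cH^f_C(A)$, independently of $N$. A cube $Q'$ becomes ``saturated'' precisely when the algorithm is forced to rescale there, and the saturated cubes together with the unabsorbed non-empty level-$N$ cubes form a dyadic cover of $A$ whose total $f(\mathrm{diam})$-cost equals $\mu_N(Q_0)$. Since each dyadic square of side $\ell$ meeting $A$ sits inside a ball of radius $\ell\sqrt{2}$ centred at one of its points of $A$, this dyadic cover induces a centred ball cover of $A$ whose $f$-cost is comparable to $\mu_N(Q_0)$, and then \eqref{mesequiv} forces $\mu_N(Q_0)\geq c\,\cH^f_C(A)>0$ for a universal $c>0$. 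Extracting a weak-$*$ subsequential limit of the $\mu_N$ (whose supports lie in the compact set $A$) produces a nonzero positive Radon measure $\mu$ concentrated on $A$ and still satisfying $\mu(Q)\leq f(\mathrm{diam}(Q))$ for every dyadic cube $Q$; normalize to a probability measure in $\cM_1(A)$.

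The final step is to translate the dyadic-cube estimate into the required ball estimate. Any ball $B(x,r)$ is covered by a universally bounded number $M$ of dyadic squares of diameter at most $Cr$ (with $C = 4\sqrt{2}$, say), so $\mu(B(x,r))\leq M f(Cr)$. In the doubling setting in which Frostman's Lemma is subsequently invoked in the paper, Lemma \ref{double=} absorbs the factor $C$ into the constant and yields $\mu(B(x,r))\leq c_1 f(r)$ for small $r$; the bound for large $r$ follows from $\mu(B(x,r))\leq 1$ together with $f(r)\geq f(r_0)>0$ after enlarging $c_1$. The main obstacle is the quantitative lower bound on $\mu_N(Q_0)$: the rescaling procedure discards mass at each saturation step, and one has to bookkeep these discarded amounts carefully so as to match them against an efficient centred cover of $A$ realising $\cH^f_C(A)$.
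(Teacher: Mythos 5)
Your $(\Leftarrow)$ direction is fine, and your overall dyadic strategy is the standard one (the paper itself does not reprove Frostman: it cites Mattila's Theorem 8.8, whose proof is essentially your construction, or alternatively Rogers' Theorem 57 plus a density argument). However, two steps as written fail. First, the key quantitative claim $\mu_N(Q_0)\geq c\,\cH^f_C(A)$ is false: by construction $\mu_N(Q_0)\leq f(\mathrm{diam}(Q_0))$ is bounded, whereas $\cH^f_C(A)$ may be arbitrarily large or infinite (the hypothesis $\cH^f(A)>0$ allows $\cH^f(A)=\infty$). The stopping (saturated) cubes do not shrink as $N\to\infty$, so the cover they induce only controls the Hausdorff \emph{content} at the fixed scale $\mathrm{diam}(Q_0)$, i.e. $\cH^f_{C,\rho_0}(A)$ for $\rho_0\approx\mathrm{diam}(Q_0)$, not the measure $\cH^f_C(A)=\lim_{\rho\to0}\cH^f_{C,\rho}(A)$. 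The correct route is $\mu_N(Q_0)\geq c\,\cH^f_{C,\rho_0}(A)$ together with the separate, easy observation that $\cH^f(A)>0$ forces the content to be positive: if the content vanished, covers of arbitrarily small total cost would automatically consist of sets of small diameter (since $f$ is increasing and $f(\rho)>0$ for every $\rho>0$), giving $\cH^f(A)=0$. Your closing remark about bookkeeping discarded mass against a cover ``realising $\cH^f_C(A)$'' aims at the wrong target for the same reason; no such bookkeeping is needed once the comparison is made with the content.

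Second, your final step only yields $\mu(B(x,r))\leq Mf(Cr)$ and you invoke doubling of $f$ (via Lemma \ref{double=}) to absorb $C$. But the theorem is stated for an arbitrary dimension function, and, contrary to your claim, the paper applies it in exactly that generality: in Proposition \ref{p1} and Theorem \ref{p3} no doubling is assumed on $f$ (only $g$ is doubling in Theorem \ref{mainthm}), and the clean form of condition \eqref{(1)} relies on having $f(r)$, not $f(Cr)$, in the Frostman bound. So as written you prove a strictly weaker statement than the one asserted. Within your framework the fix is to cap each dyadic square by $f(\text{side length})$ rather than $f(\mathrm{diam})$: then any ball $B(x,r)$ meets a bounded number of dyadic squares of side in $(r/2,r]$, each of mass at most $f(r)$, which gives $\mu(B(x,r))\leq c_1 f(r)$ with no constant inside $f$; and each stopping square of side $s$ lies in a ball of radius $s$ centred at one of its points, so $\mu_N(Q_0)$ dominates the radius-gauged ball content $\inf\{\sum_i f(r_i):A\subset\bigcup_i B(x_i,r_i)\}$, whose positivity follows from $\cH^f(A)>0$ by covering each $B(x_i,r_i)$ by $m_2$ sets of diameter $r_i$, exactly as in \eqref{mesequiv}. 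Finally, the reduction ``by inner regularity of $\cH^f$ I may assume $A$ is compact'' is not routine: for a Borel set with $\cH^f(A)=\infty$, producing a compact subset of positive (finite) measure is the Besicovitch--Davies subset theorem for general gauges (Rogers, Theorem 57) or a capacitability argument --- precisely the external input the paper cites; it should be quoted as such rather than treated as an elementary regularity property.
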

\begin{proof}
Two  very different proofs for the case where  $f(r)= r^s$ $(s >0)$ are given in \cite[Theorem 8.8]{Mat}, where is explicitly pointed out that both proofs are valid for general dimension functions. Alternatively, for the harder of the implications, namely that suitable measures exist, the result in Rogers \cite[Theorem 57]{Rog}, that for general dimension functions there exists a compact subset  $A'$ of $A$ with   $0<\cH^{f}(A')<\infty$, followed by a density argument akin to \cite[Proposition 4.11]{F}, also gives the conclusion.
\end{proof}

\subsection{ Energies and capacities} \label{enca}

 We first generalise the standard notions of $s-$energy and $s-$capacity of a measure, see for example \cite[Section \!4.3]{F} and \cite[Chapter 8]{Mat}.  As usual, let $f$ be a dimension function.

\begin{definition} The {\em $f$-energy} of $ \mu \in \cM_1(A)$ is defined as
$$ I_{f}(\mu) := \iint\!\frac{\mathrm{d}\mu(x)\mathrm{d }\mu(y)}{f(|x-y|)} \cdot$$
\end{definition}

\noindent Alternatively, we could have defined the $f$-energy via the {\em $f$-potential} at a point $x \in \R^2$, that is
$$
 \phi_f(x) := \int\frac{\mathrm{d}\mu(y)}{f(|x-y|)} \quad {\  \rm and \ so \quad } I_{f}(\mu) =  \int  \phi_f(x)\  \mathrm{d}\mu(x) .
$$

\begin{definition}\label{defcap}
 The {\em $f$-capacity} of a Borel set $ A\subset \R^2$ is defined as
$$ C_f(A) := \sup\left\{ \frac{1}{I_f(\mu)} : \mu \in\cM_1(A)  \right\}$$
with the interpretation that $C_f(\emptyset) = 0$. \end{definition}

\noindent Naturally, when $f$ is given by $f(r)= r^s$ $(s >0)$ we recover  the familiar notions of $s$-energy and $s$-capacity.

We now establish the connection between the Hausdorff measure $\cH^f(A)$ and the capacity $C_{f}(A)$ of a set $A$ with respect to a general dimension function $f$. These results stated below have a long history: apart from notational differences they appear as Theorems~1 and 2   in~\cite{T}, though  versions for the dimension functions of the form $f(r)= r^s$   date back to the 1930s. The paper  \cite{T} discusses the historical development to increasingly general dimension functions and includes further references. Proofs for  dimension functions  $f(r)= r^s$ may be found in several more recent accounts of fractal geometry, for example \cite{F1,Mat}.
Even for general dimension functions the proofs are relatively short, so for the sake of clarity, consistency of notation  and completeness we  include the proofs.

%The above proofs of Proposition \ref{prop6}  and  Proposition \ref{p1}   follow those of~\cite[Theorem 8.7(1)]{Mat} and \cite[Theorem 8.13]{Mat} respectively.  The proofs there are for dimension functions $f$ given by $f(r)= r^s$ $(s >0)$.  Our  proofs are suitably adapted to the situation of general dimension functions.

\begin{proposition} \label{prop6}
Let $A\subseteq \mathbb{R}^2$ be a Borel set and $f$ be a dimension function.  If $\mathcal{H}^f(A)<\infty$ then $C_f(A)=0$.
\end{proposition}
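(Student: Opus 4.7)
My plan is to argue by contradiction: I would fix an arbitrary $\mu \in \cM_1(A)$ and aim to show $I_f(\mu) = \infty$, which by Definition~\ref{defcap} forces $C_f(A) = 0$. So suppose $I_f(\mu) < \infty$; then since $I_f(\mu) = \int \phi_f \, \mathrm{d}\mu$, we get $\phi_f(x) < \infty$ for $\mu$-almost every $x$. A quick warm-up observation is that $\mu$ must be atomless, for if $\mu(\{x_0\}) > 0$ then the integrand $1/f(|x_0-y|)$ already blows up at $y = x_0$, forcing $\phi_f(x_0) = \infty$.

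The key first step will be to upgrade the pointwise finiteness of $\phi_f$ into the quantitative density vanishing
\[
\lim_{r \to 0^+} \frac{\mu(B(x,r))}{f(r)} = 0 \qquad \text{for $\mu$-a.e.\ } x.
\]
This follows because, with $\mu(\{x\}) = 0$, absolute continuity of integration gives $\int_{B(x,r)} \frac{\mathrm{d}\mu(y)}{f(|x-y|)} \to 0$ as $r \to 0$, while the monotonicity of $f$ yields $1/f(|x-y|) \geq 1/f(r)$ on $B(x,r)$, so $\mu(B(x,r))/f(r)$ is dominated by this vanishing integral.

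The contradiction will then come from a standard covering argument. Let $E$ be the full-$\mu$-measure set on which the above limit vanishes. Given $\epsilon > 0$, I would set
\[
E_N := \{ x \in E : \mu(B(x,r)) \leq \epsilon f(r) \text{ for all } 0 < r \leq 1/N \},
\]
which increase to $E$, so some $N$ satisfies $\mu(E_N) > 1/2$. Using $\cH^f(A) < \infty$, pick a cover $\{U_i\}$ of $A$ with $|U_i| \leq 1/N$ and $\sum_i f(|U_i|) \leq K := \cH^f(A) + 1$. For each $i$ with $U_i \cap E_N \neq \emptyset$ choose $x_i \in U_i \cap E_N$: then $U_i \subseteq B(x_i, |U_i|)$, and the defining property of $E_N$ gives $\mu(U_i) \leq \mu(B(x_i, |U_i|)) \leq \epsilon f(|U_i|)$. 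Summing,
\[
\tfrac{1}{2} < \mu(E_N) \leq \sum_{i:\, U_i \cap E_N \neq \emptyset} \mu(U_i) \leq \epsilon K,
\]
which contradicts the freedom to take $\epsilon$ arbitrarily small. Hence $I_f(\mu) = \infty$, and so $C_f(A) = 0$.

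The main obstacle will be the density step: translating the qualitative hypothesis $\phi_f(x) < \infty$ into the quantitative decay $\mu(B(x,r)) = o(f(r))$. Once that is in hand, the covering part is elementary and requires neither doubling nor any extra regularity on $f$ --- it uses only the definition of $\cH^f$ and the trivial inclusion $U \subseteq B(x, |U|)$ whenever $x \in U$.
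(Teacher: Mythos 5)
Your proposal is correct and follows essentially the same route as the paper: upgrade $I_f(\mu)<\infty$ to the uniform estimate $\mu(B(x,r))\le \epsilon f(r)$ for all small $r$ on a set of $\mu$-measure greater than $\tfrac12$, then contradict $\cH^f(A)<\infty$ by a covering argument. The only differences are cosmetic: you replace the paper's appeal to Egorov's theorem by the monotone exhaustion $E_N\nearrow E$ (with $\epsilon$ fixed first), and you bypass the centred Hausdorff measure and the comparison \eqref{mesequiv} by enclosing each cover set $U_i$ in the ball $B(x_i,|U_i|)$ with $x_i\in U_i\cap E_N$.
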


\begin{proof}
Assume $C_f(A)>0$. By definition, the set $A$ supports a  Radon probability measure $\mu$ such that $I_f(\mu)<\infty$.  Thus
$$ \int\frac{\mathrm{d}\mu(y)}{f(|x-y|)} < \infty \text{ \ \ for $\mu$-almost all $x\in A.$} $$
For such $x\in A$,
$$ \lim_{r\to 0} \mathop{\int}_{B(x,r)}\frac{\mathrm{d}\mu(y)}{f(|x-y|)} = 0. $$
By Egorov's theorem,  for all $ \ep > 0$ there exist $\delta>0$ and a Borel set $K\subseteq A$ such that $\mu(K)>\frac{1}{2}$ and
\begin{eqnarray*}
\mu(B(x,r))  &\leq & f(r) \mathop{\int}_{B(x,r)}\frac{\mathrm{d}\mu(y)}{f(|x-y|)} \\[2ex]
&\leq& \varepsilon f(r)  \text{ \ \  for all   $x\in K $   and  \ $ 0<r\leq \delta$}.
\end{eqnarray*}

Now let $\big(B(x_i,r_i)\big)_{i=1}^{\infty}$ be a cover of $K$ by balls with $x_i \in K$ and  $r_i \leq \delta$ such that
$$ \sum_{i=1}^{\infty}f(r_i) \ <\  \cH_C^{f}(K) +1\  $$
Then
$$
\frac{1}{2}<\ \mu(K) \leq \sum_{i=1}^{\infty} \mu(B(x_i,r_i))
\leq  \ve \,   \sum_{i=1}^{\infty}f(r_i)
\leq \ve \,   \left( \cH_C^{f}(A)+1\right),
$$
where $\cH_C^{f}$ is centred Hausdorff measure.
Since $\ve>0$ can be made arbitrarily small, we conclude that $\cH^{f}(A)= \cH_C^{f}(A)=\infty$, using \eqref{mesequiv}. This contradicts our hypothesis that $\cH^{f}(A)$ is finite.
\end{proof}

%Now let $(B(x_n,r_n))_{n=1}^{\infty}$ be a $\delta$-cover of $K$ such that  $K\cap B(x_n,r_n)\neq \emptyset \ (n=1,2, \ldots$) and
%$$ \sum_{n=1}^{\infty}f(r_n) < \cH^{f}(A) +1. $$
%Choose $y_n\in K\cap B_n $  $ (n=1,2,\ldots $) and note that for each $n$ we trivially have that  $  B(x_n,r_n) \subset B(y_n,2r_n)$. It then follows,  on using the fact that $f$ is doubling (with constant $c$),   that
%\begin{eqnarray*}
%\frac{1}{2}<\mu(K) &\leq& \sum_{n=1}^{\infty} \mu(B(y_n,2r_n)) \\
%&\leq & \ve \, c \,  \sum_{n=1}^{\infty}f(r_n) \\[2ex]
%&\leq& \ve \, c \,   \left( \cH^{f}(A)+1\right).
%\end{eqnarray*}
%Since $\ve>0$ can be made arbitrarily small, we conclude that $\cH^{f}(A)=\infty$. This contradicts our hypothesis that $\cH^{f}(A)$ is finite.

\vspace*{0.5cm}

\begin{proposition} \label{p1}
Let $A\subseteq \mathbb{R}^2$ be a Borel set and let $f$ and $g$ be dimension functions satisfying the integral convergence condition \eqref{(1)}.
If $\mathcal{H}^{f}(A)>0$ then $C_{g}(A)>0$.
\end{proposition}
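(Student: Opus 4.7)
The strategy is to imitate Kaufman's proof for the $s$-dimensional case, using Frostman's Lemma (Theorem~\ref{frostman}) to produce a good measure on $A$ and then showing it has finite $g$-energy via the integral convergence hypothesis \eqref{(1)}. Specifically, since $\cH^f(A)>0$, Frostman's Lemma gives a measure $\mu\in\cM_1(A)$ and a constant $c_1>0$ such that $\mu(B(x,r))\le c_1 f(r)$ for every $x\in\R^2$ and $r>0$. The goal is to bound $I_g(\mu)$, after which Definition~\ref{defcap} yields $C_g(A)\ge 1/I_g(\mu)>0$.

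To estimate $I_g(\mu)=\int\phi_g(x)\,\mathrm{d}\mu(x)$, I would fix $x$, set $F_x(r):=\mu(B(x,r))$, and rewrite the $g$-potential as a Lebesgue--Stieltjes integral
\[
\phi_g(x) \ = \  \int_0^\infty \frac{1}{g(r)}\,\mathrm{d}F_x(r).
\]
Since $1/g$ is continuous and decreasing and $F_x$ is increasing with $F_x(0)=0$ and $F_x(\infty)=1$, integration by parts gives
\[
\phi_g(x) \ = \ \left[\frac{F_x(r)}{g(r)}\right]_{0}^{\infty} - \int_0^\infty F_x(r)\,\mathrm{d}\!\left(\frac{1}{g(r)}\right).
\]
The upper boundary term vanishes because $g(r)\to\infty$ as $r\to\infty$ (or because $\mu$ has compact support, so $F_x$ is eventually $1$ and $1/g$ is bounded); the lower boundary term vanishes because $F_x(r)/g(r)\le c_1 f(r)/g(r)\to 0$ as $r\to 0^+$, which is precisely the consequence \eqref{sv1} of the integral condition \eqref{(1)} noted in Remark~\ref{rem2}.

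Having disposed of the boundary terms, substituting the Frostman bound yields
\[
\phi_g(x) \ \le \  -c_1\int_0^\infty f(r)\,\mathrm{d}\!\left(\frac{1}{g(r)}\right).
\]
I would split this integral at $r=1$. The piece over $[0,1]$ is finite by the hypothesis \eqref{(1)}. For the piece over $[1,\infty)$, since $\mu$ is compactly supported, $F_x(r)$ is bounded above by $1$ and the integral is controlled by $1/g(1)$ (up to a constant), as $g$ is positive and $1/g$ has total variation on $[1,\infty)$ bounded by $1/g(1)$. The resulting bound on $\phi_g(x)$ is independent of $x$, so integrating against $\mu$ gives $I_g(\mu)<\infty$, completing the argument.

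The only genuinely delicate point is justifying the vanishing of the boundary term at $0$, for which the integral hypothesis \eqref{(1)} is invoked through Remark~\ref{rem2} rather than directly; everything else is a bookkeeping exercise on a Lebesgue--Stieltjes integral. In particular, no smoothness of $f$ or $g$ is required beyond the continuity and monotonicity that is already part of the definition of a dimension function.
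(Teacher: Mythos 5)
Your argument is correct and follows essentially the same route as the paper's proof: Frostman's Lemma produces $\mu$ with $\mu(B(x,r))\le c_1 f(r)$, the $g$-potential is rewritten as a Stieltjes integral in $r\mapsto\mu(B(x,r))$, integration by parts together with \eqref{sv1} kills the boundary term at $0$, hypothesis \eqref{(1)} controls the integral near $0$, the range $r\ge 1$ contributes at most a multiple of $1/g(1)$, and the uniform bound on the potential gives $I_g(\mu)<\infty$, hence $C_g(A)>0$; the only cosmetic difference is that the paper splits the potential at $|x-y|=1$ before integrating by parts, while you integrate by parts on $[0,\infty)$ and split afterwards. One harmless slip: a dimension function need not satisfy $g(r)\to\infty$ as $r\to\infty$ (it may be eventually constant), so the boundary term at infinity need not vanish, but as you note it is bounded by $1/g(1)$ thanks to the compact support, which is all the finiteness argument requires.
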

\begin{proof}
%In view of Theorem~\ref{hoy}, there exists a compact set $K \subseteq A$  such that  $0<\cH^{f}(K)<\infty$. In turn,  Theorem \ref{frostman} implies that $K$ supports a Radon probability measure $\mu$  such that
By Frostman's lemma, Theorem \ref{frostman},  the Borel set $A$  supports a Radon probability measure $\mu$  such that
\begin{equation} \label{goody} \mu\left( B(x,r)\right)   \, \leq  \, c_1  \,  f(r)   \text{ \ \ $ \forall   \ \ x\in \R^2 $  \   and  \ $ r > 0 $} \end{equation}
for  some constant $c_1>0$. Fix $x\in \R^2$ and let
$$ m(r) := \mu\left( B(x,r)\right)  \, . $$
Using   \eqref{goody} and   that $\mu(\R^2) = \mu( K)=1$ and integrating by parts,
\begin{eqnarray*}
 \int \frac{\mathrm{d}\mu(y)}{g(|x-y|)} \  &=& \! \mathop{\int}_{|x-y|\leq 1}  \frac{\mathrm{d}\mu(y)}{g(|x-y|)} \ +\mathop{\int}_{|x-y|>1}  \frac{\mathrm{d}\mu(y)}{g(|x-y|)} \  \\[2ex]
& \leq & \int_{0}^{1}\frac{1}{g(r)}  \ \mathrm{d}m(r) \ + \  \frac{\mu(\mathbb{R}^2)}{g(1)} \\[2ex]
&=& \left[ \frac{m(r)}{g(r)}\right]_{0}^{1} \ -  \ \int_{0}^{1}m(r)  \ \mathrm{d}\left( \frac{1}{g(r)}\right)  \ + \   \frac{\mu(\mathbb{R}^2)}{g(1)}  \\[2ex]
& \leq & \frac{m(1)}{g(1)}  \ - \ \lim_{r\to 0^+}\frac{m(r)}{g(r)} \ - \  \int_{0}^{1}\!f(r)\ \mathrm{d }\left(\frac{1}{g(r)}\right) \ + \  \frac{\mu(\mathbb{R}^2)}{g(1)} \\[2ex]
&<& \infty,
\end{eqnarray*}
noting that $m(r)/g(r) \leq c_1 f(r)/g(r) \to 0$ by \eqref{sv1}.
This bound is uniform for all $x\in \R$  and so
$$  I_{g}(\mu) = \iint  \frac{\mathrm{d}\mu(x)\mathrm{d}\mu(y)}{g(|x-y|)}  \  <  \  \infty  \,   $$
giving $C_{g}(A)>0$ by Definition \ref{defcap}.
\end{proof}

%\hidden{
%We finish the section with a lemma connecting the $f$-capacity of a set with that of its projection on the line $L_{\theta}.$
%
%
%\begin{lemma} \label{thelem}
%Let $A\subset \mathbb{R}^2$ and $0\leq \theta< \pi$. For any dimension function $f$ we have $$ C_{f}(\proj _{\theta}A) \leq C_f(A). $$
%\end{lemma}
%\begin{proof}
%Let $\nu$ be a measure supported on $\proj_{\theta}A$. Define a measure $\mu$ on $A$ by setting $\mu(B)=\nu(\proj_{\theta}(B)), \ B\subset A$. Then
%\begin{eqnarray*}
%I_f(\nu) &=& \iint\! f(|t-s|)^{-1}\mathrm{d \!}\nu(t) \mathrm{d \!}\nu(s) \\
%&=& \iint f(|\proj_{\theta}x-\proj_{\theta}y|)^{-1} \mathrm{d \!}\mu(x)\mathrm{d \!}\mu(y) \\
%&\geq & \iint f(|x- y|)^{-1}\mathrm{d \!}\mu(x)\mathrm{d \!}\mu(y) \\
%&=& I_f(\mu) \\
%&\geq& C_{f}(A)^{-1}.
%\end{eqnarray*}
%This is true for all probability measures supported on $A$, hence
%$$
%C_{f}(\proj_{\theta}A)^{-1} = \inf \left\{ I_f(\nu): \nu\in\cM_1(\proj_{\theta}(A)) \right\}  \geq
%C_{f}(A)^{-1}
%$$
%and the requested follows.
%\end{proof}
%}

\vspace*{3ex}

\begin{rem} \label{gap}
Fix $ 0< \delta < 1 $  and  consider the family of dimension functions $(f_{\delta,s})_{s>0}$  given by
$$ f_{\delta,s}(r):= r^{\delta}  (  - \log^* \! r)^{s}  \, , $$
to within constants the same as those considered in \S\ref{inad}.  Let $A\subseteq \mathbb{R}^2$ be a Borel set  and $\alpha > 0$.   Then, by Propositions  \ref{prop6} and \ref{p1},
\begin{itemize}
\item[(i)] if $s \le \alpha $   and $\mathcal{H}^{f_{\delta,\alpha}}(A) < \infty$ then $C_{f_{\delta,s}}(A)=0$,
    ~\vspace*{0ex}
    \item[(ii)] if $s > \alpha +1  $   and $\mathcal{H}^{f_{\delta,\alpha}}(A) > 0 $ then $C_{f_{\delta,s}}(A)>0$.
\end{itemize}
%The fact that we require $s > \alpha +1  $ in (ii) is to ensure that the
% integral convergence condition \eqref{(1)} is satisfied.
 The upshot is that if $\alpha < s \le \alpha +1  $, condition \eqref{(1)} is not satisfied  and the propositions provide no information.  The main aim of the paper \cite{T} is to expose  this  gap of uncertainty.    So for example, by \cite[Theorem 3]{T},  if $f$ and $g$ are dimension functions not satisfying  condition \eqref{(1)}, then there exist Borel sets $A$  with $0<\mathcal{H}^f(A)<\infty$ but $C_g(A)=0$.
\end{rem}

\subsection{Proof of Theorem \ref{mainthm}}

\noindent  \emph{(i)}   As pointed out in Remark \ref{part1}, this is a trivial consequence of  the definition of the Hausdorff measures  that projection is a Lipschitz mapping.

\vspace{2ex}

\noindent \emph{(ii)} From  Remark \ref{rem3},  $\cH^{g}(A)= \infty$.  Thus it suffices to show that $\cH^g(\proj_{\theta}A)  = \infty  $ for  almost all $\theta\in[0,\pi)$.

Since $\cH^{f}(A)>0$,   it follows via  Proposition \ref{p1}  and the  definition  of capacity, that $A$ supports a Radon probability measure $\mu$  such that  $I_{g}(\mu)<\infty.$  For each  $\theta\in[0,\pi)$, projecting $\mu$ onto the line $L_{\theta}$ gives a  measure $\mu_{\theta}$ supported on $\proj_{\theta}A$ defined by the requirement that $\mu_{\theta}(K)=\mu(\proj_{\theta}^{-1}(K)) $ for each Borel set $K \subset L_{\theta}$.  For each $x\in\mathbb{R}^2$, let   $\phi(x)$ denote the angle that $ x$  (viewed as  a vector)  forms with the horizontal axis. Then,  by Lemma  \ref{double=}  and using the fact that $g$ is doubling with constant $c<2$, it follows that

 \begin{eqnarray*}
\int_{0}^{\pi}I_{g}(\mu_{\theta})\mathrm{d}\theta &=&\int_{0}^{\pi} \!\iint \frac{\mathrm{d}\mu_{\theta}(x)\mathrm{d}\mu_{\theta}(y)}{g(|x-y|)}\mathrm{d}\theta \\[2ex]
&=& \int_{0}^{\pi}\!\iint\frac{\mathrm{d}\mu(x)\mathrm{d}\mu(y)}{g(|\proj_{\theta}x-\proj_{\theta}y|)}\mathrm{d}\theta \\[2ex]
&\leq & \iint\left( \int_{0}^{\pi}\frac{c}{g(|x-y|)|\cos(\phi(x-y)-\theta)|^s} \mathrm{d}\theta\right) \mathrm{d}\mu(x)\mathrm{d}\mu(y)  \\[2ex]
&\leq& c_1\iint\frac{\mathrm{d}\mu(x)\mathrm{d}\mu(y)}{g(|x-y|)} \text{ \ \ \  (because $s=\log_2 c <1$) } \\[2ex]
&=& c_1 I_g(\mu)  \ < \ \infty \, .
\end{eqnarray*}
This implies that $I_{g}(\mu_{\theta})<\infty$ for almost all $\theta \in [0,\pi)$.   From the definition of capacity, $C_{g}(\proj_{\theta}A)>0$ for such $\theta$, so by Proposition~\ref{prop6}, $\cH^{g}(\proj_{\theta}A)=\infty$ for almost all $\theta \in [0,\pi)$.
\hfill $\Box$

\section{Exceptional  projections} \label{exceptangles}

 Marstrand's Theorem trivially implies that the set of exceptional angles
 $$
 E(A) := \left\{\theta \in [0,\pi)  \, :  \, \dim {\proj}_{\theta}A  < \dim A \right\}  \, ,
 $$
 is a set of (one-dimensional) Lebesgue measure zero.   Kaufman also showed \cite{K} that
 \begin{equation}  \label{dimkaufman}
 \dim  \, E(A)  \,  \, \le \, \min \{ 1,  \dim A  \}  \,
 \end{equation}
 (see also Remark \ref{matthm4} below).
 Clearly,  when $\dim A < 1$,  this bound on the size of the set of exceptional angles  is significantly stronger than the measure zero statement of Marstrand's Theorem.  It is natural to attempt to extend  Theorem~\ref{mainthm} in a similar fashion.    With this in mind, let  $E_g(A) $ denote the  exceptional set of  $\theta\in[0,\pi)$ for which  the conclusion of part (ii) of  Theorem \ref{mainthm} fails; that is
 \begin{equation}  \label{ega}
  E_g(A) := \left\{ \theta\in [0,\pi): \cH^{g}({\proj}_{\theta}A)<\infty \right\} \, .
 \end{equation}
By replacing  the integral convergence  condition \eqref{(1)} by a rate of   convergence condition we are able to establish the following strengthening  of  Theorem \ref{mainthm}. It is easily verified that  condition \eqref{1+}  below  implies  condition \eqref{(1)}  of  Theorem \ref{mainthm}.

%
% Indeed,  it is trivial if $t_0 \ge 1$.  Simply put  $t=1$ in \eqref{1+} and  \eqref{(1)} follows. For  $ t_0  < 1 $, observe that for any $0<t_1<t_0$,
%\begin{eqnarray*}
%\infty  \ >   \  \int_{0}^{1}f(r)  \ \mathrm{d \!}\left( \frac{-1}{g(t_1r)} \right) &=& \int_{0}^{1} f(r)\frac{g'(t_1r)}{g^2(t_1r)}\mathrm{d}(t_1r) \\[2ex]
%&=& \int_{0}^{t_1}f\left(\frac{r}{t_1} \right)\mathrm{d \!}\left(\frac{-1}{g(r)} \right) \\[2ex]
%&\gg& \int_{0}^{t_1}f\left(r \right)\mathrm{d \!}\left(\frac{-1}{g(r)} \right)  \, .
%\end{eqnarray*}

\begin{theorem} \label{p3} Let $A\subseteq \mathbb{R}^2$ be a Borel set. Let $f$ be a dimension function such that  $\cH^f(A) > 0 $ and let $g$ be a dimension function that is  doubling. Suppose that   there exist  constants  $  t_0 $ and   $ c_2 >  0$ such  that
\begin{equation} \label{1+}
 -\int_{0}^{1}f(r) \ \mathrm{d \!}\left(\frac{1}{g(tr)}\right)  \,  <   \, c_2 \,  \frac{1}{g(t)}  \  \quad  \text{ for all }\  \    0 <  t < t_0 \, .
\end{equation}
Then,  $ \mathcal{H}^{f} ( E_{g}(A)  ) = 0$.
\end{theorem}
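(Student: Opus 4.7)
The idea is to argue by contradiction and adapt Kaufman's potential-theoretic proof of Theorem~\ref{mainthm}(ii) with Lebesgue measure on $[0,\pi)$ replaced by a Frostman measure supported on the exceptional set $E_g(A)$.

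Suppose $\cH^f(E_g(A)) > 0$. By Frostman's Lemma (Theorem~\ref{frostman}) applied to a compact subset of $E_g(A)$ of positive $\cH^f$-measure, there is $\nu \in \cM_1(E_g(A))$ with $\nu(B(\theta,t)) \leq c_3\, f(t)$ for all $\theta$ and $t$. Since \eqref{1+} trivially implies \eqref{(1)}, the hypothesis $\cH^f(A) > 0$ combined with Proposition~\ref{p1} also produces $\mu \in \cM_1(A)$ with $I_g(\mu) < \infty$.

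The heart of the argument will be the uniform bound
\[
\int_0^\pi I_g(\mu_\theta)\, d\nu(\theta) \;\leq\; C\, I_g(\mu) \;<\; \infty.
\]
This forces $I_g(\mu_\theta) < \infty$ for $\nu$-almost every $\theta$, whence by Proposition~\ref{prop6} we have $\cH^g(\proj_\theta A) = \infty$ for $\nu$-almost every $\theta$, contradicting $\nu(E_g(A))=1$. By Fubini the display reduces to showing that for every $x \neq y$, writing $r=|x-y|$ and $\phi=\phi(x-y)$,
\[
\int_0^\pi \frac{d\nu(\theta)}{g\bigl(r\,|\cos(\phi-\theta)|\bigr)} \;\leq\; \frac{C}{g(r)}.
\]
I would split the range of $\theta$ at the threshold $|\cos(\phi-\theta)|=\tfrac12$. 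On the large-cosine side, the doubling of $g$ gives $g(r|\cos(\phi-\theta)|) \geq g(r/2) \gtrsim g(r)$, so this part contributes $O(1/g(r))$. On the complementary side, introduce $F(\alpha):=\nu\bigl(\{\theta:|\cos(\phi-\theta)|\leq \alpha\}\bigr)$; the level set lies in a union of two arcs of length $\leq \pi\alpha$, so the Frostman bound yields $F(\alpha) \leq 2c_3\, f(\pi\alpha)$. Integration by parts then gives
\[
\int_0^{1/2}\frac{dF(\alpha)}{g(r\alpha)} \;=\; \frac{F(1/2)}{g(r/2)} \;-\; \int_0^{1/2} F(\alpha)\, d\!\left(\frac{1}{g(r\alpha)}\right),
\]
the boundary term at $\alpha=0$ vanishing by virtue of \eqref{sv1}. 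Substituting $\beta=\pi\alpha$ in the remaining integral and invoking \eqref{1+} with $t=r/\pi$ bounds this piece by $2c_2c_3/g(r/\pi)$, which is $\lesssim 1/g(r)$ by a further application of the doubling of $g$.

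I expect the main technical nuisance to be the book-keeping of the factor $\pi$ produced by the Frostman bound on $F$: since $f$ is not assumed doubling, one cannot simply replace $f(\pi\alpha)$ by a constant multiple of $f(\alpha)$, and the discrepancy must instead be absorbed into the hypothesis \eqref{1+} via the change of variables $\beta=\pi\alpha$. One should also verify that the boundary term at $\alpha=0$ in the integration by parts does vanish, which follows from the observation in Remark~\ref{rem2} that \eqref{(1)} (and a fortiori \eqref{1+}) forces $f(r)/g(r)\to 0$ as $r\to 0$.
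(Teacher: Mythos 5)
Your proposal is correct and takes essentially the same route as the paper's proof: a Frostman measure $\nu$ on the exceptional set, a measure $\mu$ with $I_g(\mu)<\infty$ supplied by Proposition \ref{p1}, Fubini, and a pointwise estimate of $\int \mathrm{d}\nu(\theta)/g(|\proj_{\theta}(x-y)|)$ combining the Frostman bound with \eqref{1+}, yielding a contradiction via capacity and Proposition \ref{prop6}. The only cosmetic difference is that you integrate by parts against the distribution function of $|\cos(\phi-\theta)|$ split at $1/2$, while the paper uses the layer-cake formula split at $1/g(|x|)$; just note that after the substitution $\beta=\pi\alpha$ the range extends to $\pi/2>1$, so (exactly as in the paper's split of $[0,\pi]$ into $[0,1]$ and $[1,\pi]$) the piece with $\beta\in[1,\pi/2]$ requires the trivial bound $f(\pi/2)\bigl(1/g(t)-1/g(t\pi/2)\bigr)$ before \eqref{1+} is invoked.
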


\begin{proof}
In view of Proposition \ref{prop6},
$$ E_{g}(A) \subset   E_{*} := \left\{ \theta\in [0,\pi): C_g({\proj}_{\theta}A)=0\right\} \, . $$
Thus, it suffices to show that $\cH^{f}(E_{*})=0$. Suppose this is not the case.   Then $\cH^{f}(E_{*})>0$ and by Theorem \ref{frostman} the set  $E_{*}$ supports a probability measure $\nu\in\mathcal{M}_1(E_{*}) $  such that
\begin{equation*}
\label{goody3} \nu\left( B(x,r)\right)   \, \leq  \, c_1  \,  f(r)   \text{ \ for all   $\  x\in \R^2 $ \ and  \ $ r > 0 $} \end{equation*}
where $c_1>0$ is an  absolute  constant.  On the other hand,  since $\cH^{f}(A)>0$ and condition \eqref{1+}  implies  condition \eqref{(1)}, it follows  via  Proposition \ref{p1}  and the definition of capacity, that $A$ supports a  probability measure $\mu \in\mathcal{M}_1(A) $  such that
\begin{equation} \label{yum}
I_{g}(\mu)<\infty.
\end{equation}

For each  $\theta\in[0,\pi)$, let  $\mu_{\theta}$ be the projection of   $\mu$ onto the line $L_{\theta}$  supported on $\proj_{\theta}A$ such that $\mu_{\theta}(K)=\mu(\proj_{\theta}^{-1}(K)) $ for each Borel set $K \subset L_{\theta}$  -- as in the proof of Theorem~\ref{mainthm}.
 Let us assume for the moment that
\begin{equation}
\label{desire}
\int_{E_{*}}\!I_g(\mu_{\theta})\mathrm{d}\nu(\theta) <\infty  \, .
\end{equation}
This implies that $I_{g}(\mu_{\theta})<\infty$ for $\nu$-almost all $\theta \in E_{*}$. By the definition of capacity,  $C_{g}(\proj_{\theta}A)>0$ for such $\theta$, contradicting that $C_g({\proj}_{\theta}A)=0$ if   $\theta \in E_{*}$.  This completes the proof of the theorem modulo establishing \eqref{desire}.

To establish \eqref{desire},  we first observe that for all $x\in\mathbb{R}^2\setminus\{0\}$ and $d > 0 $,  the set
$$\left\{ \theta\in[0,\pi): |{\proj}_{\theta}x |\leq d\right\} $$
is a union of at most two intervals each of diameter at most
$
\pi \,  d/| x |
$.
 The upshot is that
$$
\nu \left(\left\{ \theta\in[0,\pi): |{\proj}_{\theta}x |\leq d\right\}\right)  \  \leq \    2 \, c_1 f \Big( \pi \frac{ d}{| x |}   \Big) \,  .   $$
This, together with the fact that $g$ is doubling, implies that
\begin{eqnarray*}
\int_{E_{*}} \ \frac{1}{g(|{\proj}_{\theta}x|)} \ \mathrm{d}\nu(\theta) &=& \int_{0}^{\infty} \nu\left(\Big\{\theta: \frac{1}{g(|{\proj}_{\theta}x|)}\geq r  \Big\} \right)\mathrm{d}r \\[2ex]
&=& \int_{0}^{1/g(|x|)} \nu\left(\Big\{\theta: \frac{1}{g(|{\proj}_{\theta}x|)}\geq r  \Big\}\right)\mathrm{d}r \\[2ex]
& & \ \quad + \quad   \ \int_{1/g(|x|)}^{\infty} \nu\left(\Big\{\theta: \frac{1}{g(|{\proj}_{\theta}x|)}\geq r  \Big\} \right)\mathrm{d}r \\[3ex]
&\leq  & \frac{1}{g(|x|)}  \ +  \  \int_{1/g(|x|)}^{\infty} 2 \, c_1 f\left(\frac{\pi}{|x|} \, g^{-1}\left(\textstyle{\frac{1}{r}} \right) \right) \mathrm{d}r \\[2ex]
&\leq & \frac{1}{g(|x|)}  \ - \ 2c_1 \ \int_{0}^{\pi}\! f(u) \ \mathrm{d\!}\left(\frac{1}{g(|x|u/\pi)} \right) \\[2ex]
&=& \frac{1}{g(|x|)}  \ + \ 2c_1 \ \int_{0}^{1}\! f(u) \ \mathrm{d\!}\left(\frac{-1}{g(|x|u/\pi)} \right) \\[2ex]
& & \ \quad + \quad   2c_1 \ \int_{1}^{\pi}\! f(u) \ \mathrm{d\!}\left(\frac{-1}{g(|x|u/\pi)} \right) \\[2ex]
& \stackrel{\eqref{1+}}{\leq} & \frac{1}{g(|x|)}  \ + \ 2c_1 c_2  \ \frac{1}{g(|x|/\pi)}  \\[2ex]
& & \ \quad + \quad   2c_1 \, f(\pi)\ \left(\frac{1}{g(|x|/\pi)} -\frac{1}{g(|x|)} \right) \\[2ex]
&\leq & c_3  \  \frac{1}{g(|x|)}  \,
\end{eqnarray*}

\noindent for some $c_3$ and for all $x\neq 0$ with $|x|<t_0$ .  Hence, using  Fubini's theorem,
\begin{eqnarray*}
\int_{E_{*}}I_g(\mu_{\theta}) \ \mathrm{d}\nu(\theta) &=&
\int_{E_{*}}\iint  \ \frac{\mathrm{d}\mu_{\theta}(x) \, \mathrm{d}\mu_{\theta}(y)}{g(|x-y|)} \ \mathrm{d}\nu(\theta) \\[2ex]
&=& \int_{E_{*}}\iint \  \frac{\mathrm{d}\mu(x) \, \mathrm{d}\mu(y)}{g(|\proj_{\theta}x-\proj_{\theta}y|)} \  \mathrm{d}\nu(\theta) \\[2ex]
&=& \iint \int_{E_{*}} \  \frac{\mathrm{d}\nu(\theta)}{g(|\proj_{\theta}(x-y)|)} \ \mathrm{d}\mu(x) \, \mathrm{d}\mu(y) \\[2ex]
& \leq & c_3 \iint \ \frac{\mathrm{d}\mu(x) \, \mathrm{d}\mu(y)}{g(|x-y|)}   \ < \ \infty \,
\end{eqnarray*}
by \eqref{yum}. This establishes   \eqref{desire} and  completes the proof.
%where in the last step we assumed $g$ is a one-to-one function. After a change of variables, the integral in the last line is
% $$ - \int_{0}^{u_1}f(u)\mathrm{d}\left(\frac{1}{g(t u)} \right)   \qquad ( \, t > 0 \, )  $$
%with $u_1=\frac{1}{t}g^{-1}\left(\frac{g(t)}{K} \right)>0,$ so in order to get the required result it remains to show
%
% $$ - \int_{0}^{u_1}f(u)\mathrm{d}\left(\frac{1}{g(tu)} \right) \ll \frac{1}{g(t)} \ \ \text{ as } t \to 0. $$
%An alternative change of variables, namely $u=g^{-1}\left( \frac{1}{r}\right)$, shows the integral is equal to
%$$ -\int_{0}^{t}f\left(\frac{1}{t}u\right)\mathrm{d\!}\left( \frac{1}{g(u)} \right)$$
%so it would also be enough to show
%$$ -\int_{0}^{t}f\left(\frac{1}{t}u\right)\mathrm{d\!}\left( \frac{1}{g(u)} \right) \ll \frac{1}{g(t)}, \  t \to 0. $$
\end{proof}

\vspace*{3ex}

\begin{rem} \label{matthm4} The  above proof of Theorem \ref{p3}  is based on the proof of the dimension inequality  \eqref{dimkaufman} presented in  \cite[Theorem~5.1]{Mat2}.     Note that it is easy to deduce \eqref{dimkaufman} from Theorem \ref{p3}. Indeed, to see that this is the case, without loss of generality assume that $ 0<  \dim A <1  $ and let $s_1,s_2$ be  real numbers satisfying  $ 0< s_1  < s_2 < \dim A $.  Let $$
 E(A,s_1):= \{ \theta\in [0,\pi): \dim \proj_{\theta}A< s_1 \}.  $$
 Let $g$ and $f$ be dimension functions given by $ g(r) := r^{s_1} $  and $ f(r):=r^{s_2}$.    It follows that $\cH^{s_2}(A) = \infty$ and that $\cH^{s_1}  \big( \proj_{\theta}A  \big) = 0$  for all $\theta \in E(A,s_1)$. Thus
$$
 E(A,s_1)  \, \subset \, E_{g} (A)  \, ,
$$
with $E_{g} (A)$ as in \eqref{ega}. Clearly,  the function $g$ is doubling and it is easily checked that  $f$ and $g$ satisfy  condition  \eqref{1+}.  Theorem \ref{p3} implies that  $ \cH^{s_2}(E_g(A))=0  $ and so
$ \dim \big( E (A,s_1)  \big)  \le s_2 \,  $, and \eqref{dimkaufman} follows on taking $s_1, s_2$ arbitrarily close to $\dim A$.
\end{rem}

\begin{rem} \label{matthm4a} The  above proof of Theorem \ref{p3}  is based on the proof of the special case \eqref{dimkaufman} presented in  \cite[Theorem~5.1]{Mat2}.
\end{rem}

Armed with Theorem \ref{p3} it is straightforward to prove  \eqref{exceptequation} which we formally state as a corollary.

\vspace*{1ex}

\begin{corollary}\label{logcor} Let $A\subseteq \mathbb{R}^{2}$ be a Borel set.  Then,  $\dim_{\log}E_{\log} (A) \leq \dim_{\rm \log}A $ where
$$
 E_{\log} (A):= \{ \theta\in [0,\pi): \dim_{\rm \log}\proj_{\theta}(A)< \dim_{\rm \log}A \}.  $$
\end{corollary}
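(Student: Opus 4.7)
The strategy mirrors the derivation of \eqref{dimkaufman} from Theorem \ref{p3} given in Remark \ref{matthm4}, substituting logarithmic dimension functions for power functions. Assume without loss of generality that $\dim_{\log} A > 0$ and fix reals $s_1, s_2$ with $0 < s_1 < s_2 < \dim_{\log} A$. Set $f(r) := (-\log^{*} r)^{-s_2}$ and $g(r) := (-\log^{*} r)^{-s_1}$. By the definition of $\dim_{\log}$, $\cH^f(A) = \infty > 0$. The function $g$ is doubling: indeed for $r < 1/4$,
$$\frac{g(2r)}{g(r)} \,=\, \left(\frac{-\log r}{-\log r - \log 2}\right)^{s_1} \,\longrightarrow\, 1 \quad \text{as } r \to 0,$$
so \eqref{double} holds on a neighbourhood of $0$.

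The substantive step is to verify the rate-of-convergence condition \eqref{1+} for the pair $(f, g)$. The plan is to apply the change of variable $u = -\log r$ (handling the contribution from $r \in [\tfrac12, 1]$ separately, which is easily seen to be $O((-\log t)^{s_1 - 1})$) to reduce the integral in \eqref{1+} to a constant multiple of
$$\int_{\log 2}^{\infty} u^{-s_2}\, (L + u)^{s_1 - 1} \, du, \qquad L := -\log t,$$
and then to split this integral at $u = L$. On $[\log 2, L]$ one bounds $(L+u)^{s_1 - 1} \leq C L^{s_1 - 1}$ and integrates $u^{-s_2}$; on $[L, \infty)$ one bounds $(L+u)^{s_1 - 1} \leq C u^{s_1 - 1}$, with convergence guaranteed by $s_1 - s_2 - 1 < -1$. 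In every regime the contribution is $O(L^{s_1}) = O(1/g(t))$ as $t \to 0$, confirming \eqref{1+} for all sufficiently small $t$.

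With the hypotheses of Theorem \ref{p3} verified, $\cH^f(E_g(A)) = 0$. Moreover, every $\theta \in [0, \pi)$ with $\dim_{\log} \proj_\theta A < s_1$ satisfies $\cH^g(\proj_\theta A) = 0 < \infty$, and so lies in $E_g(A)$. Consequently the $f$-Hausdorff measure of $\{\theta : \dim_{\log} \proj_\theta A < s_1\}$ vanishes, and its logarithmic Hausdorff dimension is at most $s_2$. Writing
$$E_{\log}(A) \,=\, \bigcup_{n=1}^\infty \{\theta \in [0, \pi) : \dim_{\log} \proj_\theta A < s_1^{(n)}\}$$
for a sequence of rationals $s_1^{(n)} \nearrow \dim_{\log} A$, and choosing $s_2^{(n)} \in (s_1^{(n)}, \dim_{\log} A)$ with $s_2^{(n)} \searrow s_1^{(n)}$, the countable stability of $\dim_{\log}$ yields $\dim_{\log} E_{\log}(A) \leq \sup_n s_1^{(n)} = \dim_{\log} A$.

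The main technical obstacle is the verification of \eqref{1+} for the chosen logarithmic gauge functions; once that integral estimate is in hand, the remainder is the bookkeeping already implicit in Remark \ref{matthm4}.
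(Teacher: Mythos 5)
Your proposal is correct and follows essentially the same route as the paper: reduce to Theorem \ref{p3} with the gauge functions $f(r)=(-\log^{*}r)^{-s_2}$ and $g(r)=(-\log^{*}r)^{-s_1}$ for $0<s_1<s_2<\dim_{\rm\log}A$, verify the rate condition \eqref{1+}, note that $\{\theta:\dim_{\rm\log}\proj_\theta A<s_1\}\subset E_g(A)$, and let $s_1,s_2$ tend to $\dim_{\rm\log}A$ via a countable union. The only difference is cosmetic: you check \eqref{1+} by the substitution $u=-\log r$ and splitting at $u=-\log t$, while the paper integrates by parts and splits at $r=t$ using $-\log(tr)\leq 2\max(-\log t,-\log r)$; both yield the same $O\bigl(1/g(t)\bigr)$ bound.
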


\begin{proof}
Without loss of generality, assume that $ 0<  \dim_{\rm \log}A < \infty $ and let $s_1,s_2$ be  real numbers satisfying  $ 0< s_1  < s_2 < \dim_{\rm \log}A $.  Let $$
 E_{\log} (A,s_1):= \{ \theta\in [0,\pi): \dim_{\rm \log}\proj_{\theta}(A)< s_1 \}.  $$
As in the proof of Theorem \ref{logResult},  let $g$ and $f$ be the dimension functions  $ g(r) := ( -\log^{*} \! r)^{-s_1} $  and $ f(r):=( -\log^{*} \! r)^{-s_2} $.  Then $ \cH^{f}(A) = \infty$ and $\cH^g \big( \proj_{\theta}(A)  \big) = 0$  for all $\theta \in E_{\log} (A,s_1)$ so
$
 E_{\log} (A,s_1)  \, \subset \, E_{g} (A)
$.
Clearly $g$ is doubling. Assume for the moment that $f$ and $g$ satisfy  condition  \eqref{1+}.  Then Theorem \ref{p3} implies that  $ \cH^{f}(E_g(A))=0  $ so from the definition of logarithmic Hausdorff dimension \eqref{logdim},
$$
 \dim_{\rm \log} \big( E_{\log} (A,s_1)  \big)  \le s_2 \, .
$$
The conclusion now follows on taking $s_1, s_2$ arbitrarily close to $\dim_{\rm \log}A $.
\vspace{2mm}

\noindent
It remains to  verify \eqref{1+}.  For all sufficiently small $t > 0 $,
\begin{eqnarray*}
-\int_{0}^{1}f(r)  \ \mathrm{d \!}\left(\frac{1}{g(tr)}\right)
&=& \left[ \frac{f(r)}{g(tr)}\right]_{1}^{0}\ + \ \int_{0}^{1}\frac{\mathrm{d}f(r)}{g(tr)} \\[2ex]
&=& -\frac{f(1)}{g(t)}\hspace{1mm} +\hspace{1mm} \int_{0}^{t}\frac{\mathrm{d}f(r)}{g(tr)}\hspace{1mm} + \hspace{1mm} \int_{t}^{1}\frac{\mathrm{d}f(r)}{g(tr)} \cdot
\end{eqnarray*}
For the first integral, $r\leq t$   implies that
\begin{eqnarray*} \frac{1}{g(tr)} \leq 2^{s_1} (-\log r)^{s_1} = 2^{s_1}\frac{1}{g(r)}
\end{eqnarray*}
and hence it follows that \vspace{2mm}
\begin{eqnarray*}
\int_{0}^{t}\frac{\mathrm{d}f(r)}{g(tr)} &\leq& 2^{s_1}\int_{0}^{t}\frac{\mathrm{d}f(r)}{g(r)} \\[2ex]
&\leq & 2^{s_1}\int_{0}^{1}\frac{\mathrm{d}f(r)}{g(r)}\\[2ex]
&=& \frac{2^{s_1}s_2(s_2-s_1)}{(\log2)^{s_2-s_1}} \cdot
\end{eqnarray*}
For the second integral, $r\geq t$ implies that \vspace{1mm}
$$ \frac{1}{g(tr)} < 2^{s_1}\frac{1}{g(t)} $$
and hence it follows that  \vspace{1.5mm}
\begin{eqnarray*}
\int_{t}^{1}\frac{\mathrm{d}f(r)}{g(tr)} < 2^{s_1}\int_{0}^{1}\frac{\mathrm{d}f(r)}{g(t)} = 2^{s_1}\frac{f(1)}{g(t)} \cdot
\end{eqnarray*}
On combining these estimates, we obtain that  \vspace{3mm}
\begin{eqnarray*}
-\int_{0}^{1}f(r)  \ \mathrm{d \!}\left(\frac{1}{g(tr)}\right)& =&(2^{s_1}-1)f(1)\frac{1}{g(t)} + \frac{2^{s_1}s_2(s_2-s_1)}{(\log2)^{s_2-s_1}}\\
&\leq & c_2\frac{1}{g(t)}
\end{eqnarray*}
for some constant $c_2$, as desired.
%&=& \int_{0}^{1/2t} s_1 f(r)(-\log t-\log r)^{s_1-1}  \ \frac{\mathrm{d}r}{r} \\[2ex]
% &=& \int_{0}^{1/2t}s_1 f(r) \left(\frac{-\log t}{-\log r} + 1\right)^{s_1-1} \!\! (-\log r)^{s_1-1}   \ \frac{\mathrm{d}r}{r} \\[2ex]
% &\leq& \int_{0}^{1/2t}s_1 f(r) \left(\frac{-\log t}{\log 2} + 1\right)^{s_1-1} \!\!
% (-\log r)^{s_1-1}  \   \frac{\mathrm{d}r}{r} \\[2ex]
%%&<& \int_{0}^{1/2}s_1f(r)(c_1(-\log t)+1)^{s_1-1}(-\log r)^{s_1-1}\frac{\mathrm{d}r}{r} \\[2ex]
%%&  & \text{ \ (where } c_1=(\log 2)^{-1}>0) \\
%&\leq &  s_1(-\log t)^{s_1} \, \int_{0}^{1}f(r)  \ \mathrm{d \!} \left(\frac{-1}{g(r)} \right)\\[2ex]
%& \leq &  c_2 \frac{1}{g(t)}   \,
\end{proof}

\vspace{0ex}

\section{Final comments \label{fincomments}}

Apart from working in  higher dimensions, there are several other directions in which one could attempt to strengthen/generalize the main theorem.  We concentrate on just a few of  them.

\vspace{2ex}

\noindent{\em The gap of uncertainty. } Theorem A$'$  shows that  we can not in general replace condition \eqref{(1)}  by \eqref{sv1}  in Theorem ~\ref{mainthm}. Thus there is a gap of uncertainty associated with Theorem~\ref{mainthm}.    It would be highly desirable to know whether or not condition \eqref{(1)} is really necessary.  Namely, if $f$ and $g$ are dimension functions such that \eqref{(1)} is not satisfied, then does there exist a set $A \subseteq \R^2$ such that $\cH^f(A) > 0$ but $\cH^g(\proj_\theta A) = 0$ for almost all $0\leq \theta < \pi$?  Theorem A$'$  provides sufficient conditions on $f$ and $g$ for the existence of such a set $A$.

%We suspect   that  it is possible to replace the integral convergence condition  \eqref{(1)}   by \eqref{sv1} in the statement of Theorem \ref{mainthm}  -- see Remark \ref{rem2}.      The presence of   \eqref{(1)}   is almost certainly a  repercussion of the potential theoretic approach taken to prove the theorem.  Indeed,   as pointed out in  Remark~\ref{gap}, condition \eqref{(1)} and thus the resulting ``gap of uncertainty'' is unavoidable if the proof is based on Kaufman's approach of exploiting the  intimate relationship between Hausdorff measures and capacity.  Thus, in order  to replace  condition \eqref{(1)}   by \eqref{sv1}  in Theorem~\ref{mainthm}  we need to develop a  new approach and this is fundamentally  interesting in its own right.  Indeed, all  known proofs concerning the stronger statements for the size of the  sets of exceptional projections (such as   those appearing in \S\ref{exceptangles})  are  based on Kaufman's potential theoretic approach.

\noindent{\em Brownian paths. }
Brownian motion sample paths, see \cite[Chapter 16]{F} for a general introduction, illustrate the sort of situation that can arise for projections of sets in $\R^3$, and which perhaps may occur in $\R^2$, though there is no direct analogue. Let $B[0,1]\subseteq \R^3$ be  (random) Brownian motion path over the unit time interval. Then, almost surely, the Hausdorff dimension of  $B[0,1]$ is logarithmically smaller than 2, more precisely $ 0< \cH^{f}(B[0,1])<\infty$ where $f$ is the dimension function $f(r) = r^2\log \log(1/r)$ (for small $r$), see \cite{CT}. However, the projection $ \proj_{P}(B[0,1])$ of $B[0,1]$ onto any given plane $P$ has exactly the same distribution as a Brownian motion in the plane, which is almost surely of Hausdorff dimension 2, or precisely,  $ 0< \cH^{g}(\proj_{P}(B[0,1]))<\infty$ where $g$ is the dimension function $g(r) = r^2 \log(1/r) \log \log \log(1/r)$, see \cite{T64}.  This example, where the exact dimension functions of a set and of almost all its projections onto a plane can be identified, illustrates the sort of change in exact dimension that may occur under projection.
\medskip

\noindent{\em Sets with no exceptional projections. }  The dimension result \eqref{dimkaufman} for   the   set of  exceptional projections has been extended in various ways -- see \cite{falfrajin, Mat2} and references within.   We highlight a result  concerning sets  $A$ for which there are no exceptional projections; that is,  sets $A$ for which $E(A)  =  \emptyset$.
 \begin{thcs}
Let $A\subset \R^2$ be a self-similar set with dense rotations.  Then
~\vspace*{-0ex}
\begin{equation}  \label{dimps}
\dim {\proj}_{\theta}A=  \min \{ \dim A, 1 \}    \quad { for \   all  }  \quad \theta \in[0,\pi).
   \end{equation}
 \end{thcs}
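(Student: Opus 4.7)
The plan is to exploit the self-similar structure together with the density of the rotational part to eliminate any exceptional direction. Write the IFS as $\{f_i\}_{i\in I}$ with $f_i(x)=\lambda_i R_{\theta_i} x + t_i$ and attractor $A=\bigcup_i f_i(A)$. First I would note that for any finite word $w=i_1\cdots i_k$ the composition $f_w$ is a similarity of contraction $\lambda_w=\prod_j \lambda_{i_j}$ and rotation angle $\theta_w=\sum_j \theta_{i_j}$, with $f_w(A)\subset A$. Since $\proj_{\theta}\circ R_{\theta_w}$ coincides with $\proj_{\theta-\theta_w}$ up to an isometry of the target line, the set $\proj_\theta(f_w(A))$ is, up to affine isometry, a $\lambda_w$-rescaled copy of $\proj_{\theta-\theta_w}A$. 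This yields the monotonicity
\[
\dim\proj_\theta A \,\geq\, \dim\proj_{\theta-\theta_w} A \qquad \text{for every word } w,
\]
and the additive semigroup $S:=\{\theta_w : w \text{ a word}\}$ is dense in $\R/\pi\Z$ by the dense-rotations hypothesis.

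Next I would invoke Marstrand's theorem applied to the Borel set $A$ to produce at least one direction $\theta^*$ with $\dim\proj_{\theta^*}A = \min\{\dim A,1\}$. Combined with the monotonicity above, every $\theta\in \theta^*+S$ satisfies $\dim\proj_\theta A\geq\min\{\dim A,1\}$, so (using \eqref{triv} for the matching upper bound) the identity \eqref{dimps} already holds on a dense subset of $[0,\pi)$.

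The hard part will be upgrading \emph{dense} to \emph{all}, since $\theta\mapsto\dim\proj_\theta A$ is not continuous in general. My plan is to work at the level of Frostman measures: for each $s<\min\{\dim A,1\}$, use Theorem \ref{frostman} to pick $\mu\in\mathcal{M}_1(A)$ with $\mu(B(x,r))\leq c\,r^s$, and then control the $s$-energy $I_s(\mu_\theta)$ of the projected measure. Decomposing $\mu$ along the IFS, each restriction $\mu_w$ supported on $f_w(A)$ projects at angle $\theta$ to an affinely rescaled copy of the projection of $\mu$ at angle $\theta-\theta_w$, so a finite $s$-energy bound in a good direction $\theta'$ transfers, via words $w$ with $\theta_w$ close to $\theta'-\theta$, to a finite $s$-energy contribution at angle $\theta$ from the piece $f_w(A)$. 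A packing argument summing such contributions over a carefully chosen collection of words (with lengths tuned so that the $\lambda_w^{-s}$ factors compensate the loss) aims to produce $I_s(\mu_\theta)<\infty$ uniformly in $\theta$. Letting $s\nearrow\min\{\dim A,1\}$ and invoking the potential-theoretic characterization of Hausdorff dimension via Proposition \ref{prop6} then forces $\dim\proj_\theta A\geq\min\{\dim A,1\}$ for every $\theta$. The delicate point, where the density of rotations genuinely pays off, is quantifying how rapidly $S$ approximates $\R/\pi\Z$ and ensuring that the energy losses accumulated across the selected words remain summable; this is the step where a serious symbolic-dynamics or Fourier-decay input is most likely unavoidable.
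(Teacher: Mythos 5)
This statement is quoted in the paper as a known result of Peres and Shmerkin \cite{PS} (later generalized in \cite{HS}); the paper offers no proof of it, so your attempt can only be measured against the original arguments. The first half of your sketch is fine and is indeed the standard ``easy'' observation: since $f_w(A)\subset A$ and $\proj_\theta\circ R_{\theta_w}$ agrees with $\proj_{\theta-\theta_w}$ up to an isometry of the target line, one gets $\dim\proj_\theta A\ge\dim\proj_{\theta-\theta_w}A$ for every word $w$, and combining this with Marstrand's theorem and the density of the rotation semigroup yields \eqref{dimps} on a dense set of directions.

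The genuine gap is exactly where you place it, and the mechanism you propose to close it does not work. Transferring a finite $s$-energy bound from a good direction $\theta'$ to an arbitrary $\theta$ via cylinders $f_w(A)$ requires $\theta-\theta_w$ to land in the set of directions where the projected measure has finite $s$-energy; Marstrand/Kaufman only guarantee that this set has full measure, and for a fixed exceptional $\theta$ the countable set $\{\theta-\theta_w\}$ may lie entirely inside the null exceptional set, while nearness of $\theta-\theta_w$ to $\theta'$ gives no control at all, because $\theta\mapsto I_s(\mu_\theta)$ (and $\theta\mapsto\dim\proj_\theta A$) is not continuous -- that is the whole difficulty. Moreover, even granting good angles for each cylinder, $I_s(\mu_\theta)$ contains cross terms between distinct cylinders $f_w(A)$, $f_{w'}(A)$ whose projections may overlap heavily, and your packing argument does not address them; in fact for $\dim A>1$ one cannot expect finite $s$-energy of $\mu_\theta$ for $s$ close to $1$ in every direction, only the dimension statement. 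The actual proofs replace this step by a different ingredient: Peres and Shmerkin work with box-counting dimension (which coincides with Hausdorff dimension for self-similar sets) and a discretized regularity argument showing that a single good direction improves all directions after passing to fine scales and using the dense rotations, while Hochman and Shmerkin prove lower semicontinuity of $\theta\mapsto\dim\proj_\theta\mu$ for the relevant measures via local entropy averages and CP-distributions, and then combine semicontinuity, almost-everywhere maximality, and the dense rotation invariance. Some input of this kind -- which you acknowledge but do not supply -- is the substance of the theorem, so the proposal as it stands is an outline of the easy half plus a heuristic for the hard half, not a proof.
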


 \noindent This theorem was proved by Peres and Shmerkin \cite{PS}  and subsequently generalized by  Hochman and  Shmerkin  \cite{HS}.  Now suppose  $A$ is self-similar set with dense rotations and  $f$ and $g$  are dimension functions as in Theorem \ref{mainthm}.  It is  natural to ask  whether or not
 the conclusion of part (ii) of Theorem \ref{mainthm} is actually valid for all $\theta$ rather than  just almost all $\theta \in[0,\pi)$.

% [COMMENT - KJF: Provided $A$ satisfies, say, the strong separation condition, $0< \cH^{s}(A)<\infty$ where $s =\dim A$. The interesting question is whether there is a dimension function $f$ such that $0< \cH^{f}({\proj}_{\theta}A)<\infty$ for all $\theta$, or at least a partition of dimension functions into classes for which for all $\theta$,     $\cH^{f}({\proj}_{\theta}A)= 0$ and $\cH^{f}({\proj}_{\theta}A)= \infty$. Possibly relevant are results of Eroglu and Farkas (see arxiv:1307.2841.pdf)   that, given dense rotations $\cH^{s}({\proj}_{\theta}A)= 0$ for all $\theta$ - perhaps the arguments could be refined in terms of dimension functions.]

\vspace{2ex}

\noindent{\em Lengths of projections. }
It is natural to seek a finer version of part (ii) of Marstrand's theorem  which gives a criterion for almost all projections of a set to have positive length. One aspect of this was investigated  by Peres and Solomyak \cite{PSol}, who considered dimension functions $f$ such that  $f(r)/r^2$ is decreasing for $r>0$ (a condition that holds in virtually all cases of interest).  The following statement constitutes parts (i) and (ii) of their main result \cite[Theorem 1.1]{PSol}.

 \begin{thbs}
Let $f$ be a dimension function  such that  $f(r)/r^2$ is decreasing. Then $\int_0^1 r^{-2}f(r) \mathrm{d} r <\infty$ if and only if for any Borel set $A\subset \R^2$ with $ \cH^{f}(A)>0$ one has that ${\mathcal L}(\proj_{\theta}A)>0$  for almost all $\theta \in[0,\pi)$.
 \end{thbs}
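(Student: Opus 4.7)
The statement is an ``if and only if'', and I will attack the two directions separately, expecting the necessity direction to carry the technical weight.

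\textbf{Sufficiency.} Suppose $\int_0^1 r^{-2}f(r)\,\mathrm{d}r<\infty$, and let $A\subseteq\R^2$ be Borel with $\cH^f(A)>0$. By Frostman's lemma (Theorem~\ref{frostman}) there is $\mu\in\cM_1(A)$ such that $\mu(B(x,r))\le c_1 f(r)$ for all $x\in\R^2$ and $r>0$. The first step is to verify that the $1$-energy is finite: the layer-cake representation gives
\[
\int\frac{\mathrm{d}\mu(y)}{|x-y|}\;=\;\int_0^{\infty}\frac{\mu(B(x,u))}{u^2}\,\mathrm{d}u,
\]
and splitting the $u$-integral at $u=1$ and using the Frostman bound on $(0,1]$ together with $\mu(\R^2)=1$ on $(1,\infty)$ yields the uniform-in-$x$ estimate
\[
\int\frac{\mathrm{d}\mu(y)}{|x-y|}\;\le\; c_1\!\int_0^1 \frac{f(u)}{u^2}\,\mathrm{d}u + 1 \;<\;\infty,
\]
hence $I_1(\mu)<\infty$. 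The second step is the standard Fourier identity: using $\widehat{\mu_\theta}(t)=\widehat\mu(t\cos\theta,t\sin\theta)$, Plancherel's theorem and polar coordinates on $\R^2$,
\[
\int_0^{\pi}\!\|\mu_\theta\|_{L^2}^2\,\mathrm{d}\theta \;=\;\int_{\R^2}\!|\widehat\mu(\xi)|^2\frac{\mathrm{d}\xi}{|\xi|}\;=\;c\,I_1(\mu)\;<\;\infty.
\]
Thus $\mu_\theta\in L^2(L_\theta)$ for $\LL$-almost every $\theta\in[0,\pi)$, so $\mu_\theta$ is absolutely continuous with respect to one-dimensional Lebesgue measure. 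Since $\mu_\theta$ is a probability measure supported on $\proj_\theta A$, this forces $\LL(\proj_\theta A)>0$ for almost every $\theta$.

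\textbf{Necessity.} Conversely, if $\int_0^1 r^{-2}f(r)\,\mathrm{d}r=\infty$ I would exhibit a Borel set $A\subseteq\R^2$ with $\cH^f(A)>0$ but $\LL(\proj_\theta A)=0$ for almost every $\theta$. The approach is a randomised Cantor-type iteration on dyadic scales $r_n\downarrow 0$: at stage $n$, each surviving parent cell is subdivided into $N_n\asymp 1/f(r_n)$ children of diameter $r_n$, their positions drawn from a suitable distribution so that, after projection in a generic direction, the images of distinct children collapse substantially. The choice of $N_n$ guarantees via a Frostman-type mass-distribution argument that the limit measure obeys $\mu(B(x,r))\lesssim f(r)$, securing $\cH^f(A)>0$. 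The divergence of $\int_0^1 r^{-2}f(r)\,\mathrm{d}r$ is then used to show that, for $\LL$-almost every $\theta$, the expected Lebesgue measure of the $n$-th stage projection $\proj_\theta A_n$ tends to zero; Fubini and Borel--Cantelli yield $\LL(\proj_\theta A)=0$ almost surely for $\LL$-almost every $\theta$, whereupon one extracts a deterministic realisation.

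\textbf{Main obstacle.} The genuine difficulty lies in the necessity direction: one must simultaneously spread the mass enough to secure the Frostman upper bound (so that $\cH^f(A)>0$) while concentrating it enough under projection to kill Lebesgue measure. Calibrating the scales $r_n$ and multiplicities $N_n$ from the quantitative divergence of $\int_0^1 r^{-2}f(r)\,\mathrm{d}r$, and using the monotonicity hypothesis on $f(r)/r^2$ to compare the relevant dyadic sums with the integral, is the delicate technical heart of the argument.
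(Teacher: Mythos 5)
This statement is not proved in the paper at all: it is quoted verbatim from Peres and Solomyak \cite{PSol} in the final comments, so there is no in-paper argument to compare against; your proposal has to stand on its own. Your ``sufficiency'' half does: the layer-cake estimate $\int |x-y|^{-1}\,\mathrm{d}\mu(y)=\int_0^\infty u^{-2}\mu(B(x,u))\,\mathrm{d}u$ combined with the Frostman bound gives a uniformly bounded $1$-potential, hence $I_1(\mu)<\infty$ (this is exactly the paper's Proposition \ref{p1} specialised to $g(r)=r$, for which condition \eqref{(1)} reads $\int_0^1 r^{-2}f(r)\,\mathrm{d}r<\infty$), and the classical Fourier identity $\int_0^\pi\int|\widehat{\mu_\theta}(t)|^2\,\mathrm{d}t\,\mathrm{d}\theta=c\int|\widehat{\mu}(\xi)|^2|\xi|^{-1}\,\mathrm{d}\xi=c'I_1(\mu)$ then gives $\mu_\theta\in L^2$, hence absolute continuity and positive length of $\proj_\theta A$, for almost every $\theta$. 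Note this Fourier step is genuinely needed and cannot be replaced by the paper's Theorem \ref{mainthm}(ii), since $g(r)=r$ has doubling constant exactly $2$ and is excluded there.

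The necessity half, however, is a plan rather than a proof, and this is where the entire content of the theorem lies. You never construct the set: the choice of scales $r_n$, the placement of the children, the mechanism by which a \emph{generic} direction (not just specially chosen directions, as in the appendix's construction) collapses the projections, and above all the step where divergence of $\int_0^1 r^{-2}f(r)\,\mathrm{d}r$ forces the expected length of $\proj_\theta A_n$ to $0$, are all asserted without argument. As written the bookkeeping is also inconsistent: if every parent cell acquires $N_n\asymp 1/f(r_n)$ children of diameter $r_n$, the total number of $n$-th level cells is $\prod_{m\le n}N_m\gg 1/f(r_n)$, which is incompatible with the Frostman bound $\mu(B(x,r))\lesssim f(r)$ you invoke; the correct normalisation is $N_1\cdots N_n f(r_n)\asymp 1$ (children per parent $\asymp f(r_{n-1})/f(r_n)$), as in \eqref{ab2} of the appendix. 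Note also that the appendix's deterministic construction (Theorem A) does not rescue you here: it kills $\cH^g(\proj_\theta A)$ only under the much stronger hypothesis \eqref{fgrelation} relating $f$ and $g$, not under mere divergence of the integral. The actual necessity argument of Peres and Solomyak rests on a carefully randomised Cantor construction and a nontrivial estimate of the expected length of projections; without carrying out that construction and estimate, this direction remains unproved.
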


 \noindent Note that the integral convergence  condition in the above theorem  is exactly condition \eqref{(1)} in Theorem \ref{p3}  with  $g$ given by $g(r)= r$ and so $\cH^g $ is Lebesgue measure ${\mathcal L}$ .
\medskip

\vspace*{3ex}

\noindent{\bf Acknowledgements.}  SV would like to thank Julien Barral and St\'{e}phane Seuret for organising the wonderful conference  ``Fractals and Related Fields III''  in Porquerolles (19-25, September 2015).  It was during one of the many excellent talks at this conference that the problem of Marstrand for sets of dimension zero (and hence for general dimension functions) reared its ugly head in the sparse grey matter of SV's head!    SV and AZ  would like to thank Henna Koivusalo for many many useful conversations and patiently listening to our ramblings.

\noindent We would like to thank the referee for carefully reading the original manuscript. Her/his comments  have helped improve the clarity of the paper.

\vspace*{3ex}

%%%%%%%%%%%%%%%%%%%%%%%%%%%%%%%%%%%%%%

{\small

}

{\small

%%%%%%%%%*******************

%%%%%%%%%*******************

\vspace{5mm}

\noindent Victor Beresnevich: Department of Mathematics,
University of York,

\vspace{-2mm}

\noindent\phantom{Victor Beresnevich: }Heslington, York, YO10
5DD, England.

%\vspace{0mm}

\noindent\phantom{Victor Beresnevich: }e-mail: vb8@york.ac.uk

%%%%%%%%%*******************

%%%%%%%%%*******************

\vspace{5mm}

\noindent Kenneth Falconer: Mathematical Institute,
University of St Andrews,

\vspace{-2mm}

\noindent\phantom{Kenneth Falconer: }North Haugh, St Andrews, Fife,
KY16 9SS,
Scotland.

%\vspace{0mm}

\noindent\phantom{Kenneth Falconer: }e-mail: kjf@st-andrews.ac.uk

%%%%%%%%%*******************

%%%%%%%%%*******************

\vspace{5mm}

\noindent Sanju L. Velani: Department of Mathematics, University of York,

\vspace{-2mm}

\noindent\phantom{Sanju L. Velani: }Heslington, York, YO10 5DD, England.

%\vspace{0mm}

\noindent\phantom{Sanju L. Velani: }e-mail: slv3@york.ac.uk

%%%%%%%%%*******************

%%%%%%%%%*******************

\vspace{5mm}

\noindent Agamemnon Zafeiropoulos: Department of Mathematics,
University of York,

\vspace{-2mm}

\noindent\phantom{Agamemnon Zafeiropoulos: }Heslington, York, YO10
5DD, England.

%\vspace{0mm}

\noindent\phantom{Agamemnon Zafeiropoulos: }e-mail: az629@york.ac.uk

}

%%%%%%%%%%%%%%%%%%%%%%%%%

\newpage
%%%%%%%%%%%%%%%%%%%%%%%%%%%%%%%%%%%%%%%%%%%%%

\begin{appendix}

\begin{center} {\LARGE  Appendix:  \  The gap of uncertainty  \\ ~ \hspace*{10ex} }
\\ ~ \vspace*{-2ex} \\
{\large David Simmons \qquad   Han Yu    \qquad Agamemnon Zafeiropoulos }
\end{center}

\vspace*{6ex}

%\section{ On the necessity of condition \eqref{(1)} }

As discussed in Remark \ref{rem2} and explicitly demonstrated in  \S\ref{inad},  the integral convergence condition \eqref{(1)} gives rise to a gap of uncertainty. It is natural to ask whether this condition is really necessary. Namely, if $f$ and $g$ are dimension functions such that \eqref{(1)} is not satisfied, then does there exist a set $A \subseteq \R^2$ such that $\cH^f(A) > 0$ but $\cH^g(\proj_\theta A) = 0$ for almost all $0\leq \theta < \pi$? In this appendix we partially answer this question by providing sufficient conditions on $f$ and $g$ for the existence of such a set $A$. Our construction of this set is a generalization of a construction of Martin and Mattila  \cite{mma}, in which they proved that for every $0 < s \le  1$ there exists a set $A \subseteq \R^2$ such that $\cH^s(A) > 0$ but $\cH^s(\proj_\theta A) = 0$ for all $0\leq \theta < \pi$.   By making a careful quantitative analysis of their construction, we are able to improve their result and establish the following statement.

\vspace{2mm}

% A paper should not have two different theorems with the same name, so I changed the name of this theorem.
\begin{thdet}
\label{propositionpartialconverse}
Let $f,g$ be dimension functions such that $f$ is doubling with exponent $s_1\leq 1$ and codoubling with exponent $s_2>0$, and such that
\begin{equation} \label{fgrelation}
g(r) \leq M\ f\left(r\log(r^{-1})\right) \,\quad  \forall \,  0< r < r_0
\end{equation}
where $r_0 > 0 $ and $ M>0$ are  constants. Then there exists a set $A\subseteq \R^{2}$ with $0< \cH^{f}(A)< \infty $ but $\cH^g(\proj_{\theta}A) = 0$ for all $0\leq \theta < \pi$.
\end{thdet}

%By making a careful quantitative analysis of their construction, we are able to improve their result and establish the following result.
%
%
%
%\vspace{2mm}
%
%\begin{thdet}
%\label{propositionpartialconverse}
%Let $f,g$ be dimension functions such that $f$ is doubling with exponent $s_1\leq 1$ and codoubling with exponent $s_2>0$, and such that
%\begin{equation} \label{fgrelation}
%g(r) \leq M\ f\left(r\log(r^{-1})\right) \,
%\end{equation}
%where $M>0$ is a constant. Then there exists a set $A\subseteq \R^{2}$ with $\cH^{f}(A)>0$ but $\cH^g(\proj_{\theta}A) = 0$ for all $0\leq \theta < \pi$.
%\end{thdet}
%
%
%This is equivalent to Theorem A  appearing in Remark \ref{rem2}.  For convenience,  we have expressed doubling and codoubling constants in terms of exponents.  A dimension function $f$  is called {\it codoubling with exponent $s$} if there  exist constants $s >0 $, $ \kappa >0 $ and $r_1 > 0$ such that
%\begin{equation}  \label{codouble2}
%f(r \lambda)   \, \le   \, \kappa  \; \lambda^s f(r)    \qquad  \forall \  0 <  \lambda < 1  \ \ {\rm and } \ \ 0< r < r_1 \, .
%\end{equation}
% The notion of codoubling with constant $c > 1 $ as given by \eqref{codoublesv} is easily seen to be equivalent to the notion of codoubling with exponent  $s > 0 $  (cf.   Lemma \ref{double=}).

Recall, a dimension function $f$  is called {\it doubling with exponent $s$} if there exist constants $\kappa >0$ and $r_1 > 0$ such that
\begin{equation}  \label{double3}
f(r \lambda)   \, \ge   \, \kappa  \; \lambda^s f(r)    \qquad  \forall \  0 <  \lambda < 1  \ \ {\rm and } \ \ 0< r < r_1 \, .
\end{equation}
Moreover, $f$ is called {\it codoubling with exponent $s$} if there exist constants $\kappa >0$ and $r_1 > 0$ such that
\begin{equation}  \label{codouble2}
f(r \lambda)   \, \le   \, \kappa  \; \lambda^s f(r)    \qquad  \forall \  0 <  \lambda < 1  \ \ {\rm and } \ \ 0< r < r_1 \, .
\end{equation}
Note that these definitions of doubling and codoubling with exponent $s$ are different from the definitions of doubling and codoubling (with constant $c>1$) appearing in \S\ref{mainresultsec}.   It is easily seen that if a function is doubling (resp. codoubling) in the sense of \eqref{double} (resp. \eqref{codoublesv}) with constant $c > 1$, then it is also doubling (resp. codoubling) in the sense of \eqref{double3} (resp. \eqref{codouble2}) with exponent $s = \log_2c$. (See Lemma \ref{double=} for the case of doubling functions.) However, observe that if $f$ is doubling with exponent $s \leq 1$ then it is not necessarily doubling with constant $c \leq 2$. Also, if a function is  codoubling in the sense of \eqref{codouble2} (with exponent $s>0$) then it is not necessarily codoubling in the sense of \eqref{codoublesv} (with constant $c>1$). Thus Theorem A  is strictly stronger than Theorem A$'$ appearing in Remark \ref{rem2}.

\vspace{2mm}
\begin{rem}The assumption that the function $f$ is doubling with exponent $s\leq 1$ restricts our attention  to subsets  $A\subseteq \R^{2}$ of Hausdorff dimension at most $1$, which is expected given the nature of the problem  (cf. Remark \ref{ohyes}).
\end{rem}

%\vspace{2mm}
%\begin{rem}
%This result does not contradict the main theorem, since when $f$ and $g$ satisfy \eqref{fgrelation} we have
%\begin{eqnarray*}
%\int_{0}^{1}\frac{\mathrm{d}f(r)}{g(r)} &\gg & \int_{0}^{1}\frac{\mathrm{d}f(r)}{f\left( r\log (r^{-1})\right)} \\ [2ex]
%& =& \int_{0}^{1}\frac{\mathrm{d}r}{f^{-1}(r)\log \frac{1}{f^{-1}(r)}} \\ [2ex]
%&\gg & \int_{0}^{1} \frac{\mathrm{d}r}{r\log(r^{-1})} \\ [2ex]
%&=& +\infty,
%\end{eqnarray*}
%and $f,g$ do not satisfy the integral condition \eqref{(1)}.
%\end{rem}

\vspace{2mm}
\begin{rem}
\label{remlogp}
The growth condition  \eqref{fgrelation}    can be replaced by any condition of the form
\begin{equation}  \label{codouble22} g(r) \ll f\left(r\log(r^{-1})\log_3(r^{-1})\log_4(r^{-1})\cdots \log_p(r^{-1}) \right)
\end{equation}
where $p\geq 3$ is a positive integer and $r> 0 $ is sufficiently small -- see Remark \ref{remlogpa} below. Here we write $\log_2t= \log\log t, \ \log_3 t=\log\log\log t,$ \ etc  .
\end{rem}
\vspace{2mm}

\begin{rem}
\label{showingit}
Under the assumptions of Theorem A the integral
$$
\int_{0}^{1}\frac{\mathrm{d}f(r)}{g(r)}  \ =  \   \frac{f(1)}{g(1)}   \, - \, \int_{0}^{1}f(r)  \ \mathrm{d \!}\left(\frac{1}{g(r)}\right)   $$ diverges, hence  the integral convergence condition \eqref{(1)} is not satisfied and in turn Theorem~\ref{mainthm} is not violated. To see this, observe that since $f$ is doubling with exponent $s_1\leq 1$,   we have
\begin{eqnarray*}
f\left( r\log\frac{1}{r}\right) \ll f(r)  \, \log\frac{1}{r} \hspace{6mm} (r\to 0^+).
\end{eqnarray*}
Now since $f$ is codoubling with exponent $s_2>0$, we have
\[
f(r) \ll r^{s_2}
\]
and thus
\[
\log\frac{1}{r} \ll \log\frac{1}{f(r)}  \hspace{4mm} (r\to 0^+).
\]
Together the above estimates yield
\begin{eqnarray*}
f\left( r\log\frac{1}{r}\right) \ll f(r)\log\frac{1}{f(r)}  \, .
\end{eqnarray*}
It then follows that for any $g$ satisfying condition \eqref{fgrelation},  we have
\begin{eqnarray*}
 \int_{0}^{1}\frac{\mathrm{d}f(r)}{g(r)}   &\gg & \int_{0}^{1}\frac{\mathrm{d}f(r)}{f\left(r\log\frac{1}{r}\right)} \\[2ex]
&\gg & \int_0^1\frac{\mathrm{d}f(r)}{f(r)\log\frac{1}{f(r)}}  \\[2ex]
&=& \int_0^{f^{-1}(1)}\!\!\frac{\mathrm{d}x}{x\log\frac{1}{x}}  \ = \ \infty \, . \\[2ex]
\end{eqnarray*}
\end{rem}

\vspace{4mm}

\noindent{\bf \large{A.1 \ \  Construction of an $f$-set} }

\vspace{2mm}

Given a dimension function $f$, a set $A\subseteq \R^{n}$ is called an \emph{$f$-set} if $0<\cH^f(A)<\infty$. Here we present the construction of an $f$-set $A\subseteq \R^2$ for a given function $f$, which is similar to the one presented by Martin and Mattila in \cite[Section 5.3]{mma} for dimension functions of the form $r\mapsto r^s$. In the next section, we will show that by choosing the parameters of the construction appropriately, the resulting $f$-set $A$ will satisfy $\cH^g(\proj_\theta A) = 0$ for all $0\leq \theta < \pi$.\\

\noindent
Throughout, $(r_k)_{k=0}^{\infty}$ is a decreasing sequence of positive real numbers tending to $0$, $(N_k)_{k=1}^{\infty}$ is a sequence of positive integers $\geq 2$, and $(\theta_k)_{k=1}^{\infty}$ is a sequence of angles $0\leq \theta_k < \pi, \ k\geq 1$. The sequences $(r_k)_{k=0}^{\infty}$ and $(N_k)_{k=1}^{\infty}$ will be assumed to satisfy the inequalities
\begin{equation} \label{ab2}
a\leq N_1\cdots N_k f(r_k)\leq 2a
\end{equation}
and
\begin{equation}
\label{rk1rk}
N_{k+1} r_{k+1} < r_k
\end{equation}
for all $k\geq 0$, for some constant $a > 0$. \\

\noindent
Let $A_0$ be the closed disc of radius $r_0$ centered at the origin.  In the first step, inside $A_0$ we consider $N_1$ subdiscs of radius $r_1$, denoted $C_1,\ldots,C_{N_1}$ and defined as follows: their centers are equally spaced, lying on the diameter of $A_0$ which forms angle $\theta_1$ (measured counterclockwise) with the horizontal axis, and the boundaries of first and last subdisc are tangent to the boundary of $A_0$. Condition  \eqref{rk1rk} guarantees that these subdiscs are disjoint. Let $d_1 = \theta_1$, and set
$$ A_1 = \bigcup_{i=1}^{N_1}C_i.   $$
Now inductively assume that for some $k\geq 1$ we have defined the discs $C_{i_1\ldots i_k},\ 1\leq i_j\leq N_j,\ 1\leq j\leq k,$ each of radius $r_k$. \vspace{2mm}

\noindent
At the $(k+1)$st step, inside each disc $C_{i_1\ldots i_k}$ we consider $N_{k+1}$ subdiscs $C_{i_1\ldots i_k 1},\ldots, C_{i_1 \ldots i_k N_{k+1}}$, each of radius $r_{k+1}$, defined as follows: their centers are equally spaced along the diameter of $C_{i_1\ldots i_k}$ which forms angle $\theta_{k+1}$ with the line containing the centers of the discs of the $k$th step, and the boundaries of the first and last subdiscs are tangent to the boundary of $C_{i_1\ldots i_k}$. Again, condition  \eqref{rk1rk} guarantees that these subdiscs are disjoint. Let $d_{k+1}\equiv \theta_1+\ldots + \theta_{k+1} \ ( {\rm mod }\ \pi)$, so that for any disc $C_{i_1\ldots i_k}$ of $A_k$, $d_{k+1}$ is the angle between the diameter of $C_{i_1\ldots i_k}$ used to define the subdiscs of $C_{i_1\ldots i_k}$ and the horizontal axis. Set
$$ A_{k+1}= \mathop{\bigcup_{1\leq i_j \leq N_j }}_{(j=1,\ldots, k+1)}C_{i_1\ldots i_{k+1}}. $$
We complete the construction by setting
$$A= \bigcap_{k=1}^{\infty}A_k. $$
%\vspace{1mm}
\noindent
We show that under certain conditions on $f$ and appropriate choices of the sequences $(r_k)_{k=0}^\infty$ and $(N_k)_{k=1}^\infty$, the set $A$ is an $f$-set.
\vspace{2mm}

%\begin{definition}
%A function $f:(0,\infty)\to (0,\infty)$ is said to be {\it doubling with exponent $s > 0$} if there exists a constant $\kappa > 0$ such that for all $0 < \lambda < 1$ and $r > 0$ sufficiently small, we have
%\[
%f(\lambda r) \geq \kappa \lambda^s f(r).
%\]
%\end{definition}

%\begin{proposition}  \label{propositionconstruction}
%Let $f$ be a dimension function which is doubling with exponent $s\leq 1$, and let $(r_k)_{k = 0}^\infty$ be a sequence satisfying the inequalities
%\begin{equation} \label{rad1}
%f(r_{k+1}) < \frac{1}{4} f(r_k)
%\end{equation}
%and
%\begin{equation} \label{rad2}
%\frac{f(r_{k+1})}{r_{k+1}} > 3 \frac{f(r_k)}{r_k}
%\end{equation}
%for all $k\geq 0$. Let $(\theta_k)_{k = 1}^\infty$ be any sequence of real numbers. Then the parameter $a > 0$ and the sequence $(N_k)_{k = 1}^\infty$ can be chosen so as to satisfy \eqref{ab2} and \eqref{rk1rk} for all $k\geq 0$. The resulting set $A\subseteq\R^2$ constructed as above satisfies
%$$0\ <\ \cH^f(A)\ <\ \infty .$$
%\end{proposition}

\begin{thdeta}  \label{propositionconstruction}
Let $f$ be a dimension function which is doubling with exponent $s\leq 1$, and let $(r_k)_{k = 0}^\infty$ be a sequence satisfying the inequalities
\begin{equation} \label{rad1}
f(r_{k+1}) < \frac{1}{4} f(r_k)
\end{equation}
and
\begin{equation} \label{rad2}
\frac{f(r_{k+1})}{r_{k+1}} > 3 \frac{f(r_k)}{r_k}
\end{equation}
for all $k\geq 0$. Let $(\theta_k)_{k = 1}^\infty$ be any sequence of real numbers. Then the parameter $a > 0$ and the sequence $(N_k)_{k = 1}^\infty$ can be chosen so as to satisfy \eqref{ab2} and \eqref{rk1rk} for all $k\geq 0$. The resulting set $A\subseteq\R^2$ constructed as above is an $f$-set.
\end{thdeta}
\begin{proof}
Let $a = f(r_0)$, so that \eqref{ab2} automatically holds when $k = 0$. Now inductively assume that for some $k\geq 0$ we have chosen $N_1,\ldots, N_k \geq 2$ such that \eqref{ab2} holds. Since
\begin{eqnarray*}
\frac{2a}{N_1\cdots N_k f(r_{k+1})} - \frac{a}{N_1\cdots N_k f(r_{k+1})} &=& a\frac{f(r_k)}{f(r_{k+1})}\frac{1}{N_1\cdots N_k\ f(r_k)} \\[2ex]
& \stackrel{\eqref{ab2}}{\geq} & \frac 12 \frac{f(r_k)}{f(r_{k+1})} \\[2ex]
&\stackrel{\eqref{rad1}}{>} & 2,
\end{eqnarray*}
the interval
$$
\left[ \frac{2a}{N_1\cdots N_k f(r_{k+1})} \, ,   \frac{a}{N_1\cdots N_k f(r_{k+1})} \right]
$$
%whose endpoints are the two numbers on the left-hand side
contains a positive integer $N_{k+1}\geq 2$.  Thus, the inequality
\begin{equation}
\label{ab}
a\ \leq \ N_1  \cdots N_k N_{k+1}\hspace{1mm}f(r_{k+1})\ \leq \ 2a
\end{equation}
is satisfied. This completes the inductive step, thus demonstrating that the sequence $(N_k)_{k=1}^\infty$ can be chosen so that \eqref{ab2} holds for all $k \geq 0$.\\

\noindent
To demonstrate \eqref{rk1rk}, we note that
\begin{align}
\label{Nk1comparison}
N_{k+1} & \stackrel{\eqref{ab}}{\leq} \frac{2a}{N_1\cdots N_k f(r_{k+1})}
\stackrel{\eqref{ab2}}{\leq} 2\cdot \frac{f(r_k)}{f(r_{k+1})}
\stackrel{\eqref{rad2}}{<} \frac{2}{3}\cdot\frac{r_k}{r_{k+1}}
\end{align}
and in particular $N_{k+1} r_{k+1} < r_k$.\\

\noindent
For each $k\in\N$,  the set $A_k$ is a cover of $A$ consisting of discs of radius $r_k$. The number of balls in this cover is $ N_1 \cdots  N_k $, hence for $k\in\N$ we have
$$ \cH^f_{r_k}(A)\ \leq \ N_1\cdots N_k f(r_k)\ \leq \ 2a $$
and thus
$$ \cH^f(A) \ = \ \sup_{k>0}\cH^f_{r_k}(A)\ \leq \ 2a \ < \ \infty.  $$
Now consider the probability measure $\mu$ supported on $A$ which is defined by assigning each of the discs $C_{i_1\ldots i_k}$ of $A_k$ the same measure, i.e. by setting
$$ \mu(C_{i_1\ldots i_k}) = \frac{1}{N_1\cdots N_k}\cdot$$
We claim that for all $x\in A$ and $r>0$ small enough,
\begin{equation}
\label{massdist}
\mu\left( B(x,r) \right) \leq C f(r)
\end{equation}
for some constant $C>0$. By the Mass Distribution Principle (see for example \cite[Lemma~3]{mem}), this will imply that $\cH^f(A)>0$ and thus complete the proof.\\

\noindent
Whenever $r,r'$ are sufficiently small and $r < r'$, since $f$ is doubling with exponent $s\leq 1$, for some constant $\kappa > 0$ we have
\begin{align*}
\frac{f(r)}{r} & \geq \kappa \frac{1}{r}\left( \frac{r}{r'} \right)^{s}f(r')
= \kappa \left(\frac{r'}{r}\right)^{1-s}\frac{f(r')}{r'}
\geq \kappa\ \frac{f(r')}{r'},
\end{align*}
which gives
\begin{equation}
\label{frrdecreasing}
\frac{f(r')}{r'} \ \leq \ \kappa^{-1}\frac{f(r)}{r} \cdot
\end{equation}
Now fix $x\in A$ and $r>0$, and let $k\in\N$ be maximal such that $B(x,r)\cap A$ is contained in only one disc of $A_k$. Consider the following cases:

\noindent
{\bf Case 1:} \  $r<r_{k}.$ \ Let $s_{k+1}$ be the common distance between any two consecutive subdiscs of $A_{k+1}$. Then by subdividing the appropriate diameter of $C_{i_1\ldots i_k}$ into intervals consisting of its intersections with discs $C_{i_1\ldots i_k j}$ as well as the gaps between them, we find that
\begin{equation} \label{space}
2N_{k+1}r_{k+1} + (N_{k+1}-1)s_{k+1}=2r_k.
\end{equation}
Now in any sequence of $n$ consecutive subdiscs of $A_{k+1}$, the distance between the first and last subdiscs in this sequence is
\[
(n-1)s_{k+1} + (n-2)2r_{k+1} > (n-2) (s_{k+1} + 2r_{k+1}).
\]
Since the diameter of $B(x,r)$ is $2r$, if $n$ is the number of subdiscs of $A_{k+1}$ that intersect $B(x,r)$, then the distance given above must be less than $2r$. It follows that
\begin{equation}
\label{nbound1}
n \leq 2 + \frac{2r}{2r_{k+1} + s_{k+1}}\cdot
\end{equation}
On the other hand, we have
\begin{align*}
N_{k+1}(s_{k+1}-r_{k+1}) & \; >  \;  (N_{k+1}-1)s_{k+1} - N_{k+1}r_{k+1} \\
& \stackrel{\eqref{space}}{=} 2r_k - 3N_{k+1}r_{k+1}   \\  \;
& \stackrel{\eqref{Nk1comparison}}{>}  \;  0,
\end{align*}
which implies that
\begin{equation} \label{zx}
s_{k+1}>r_{k+1}  \, .
\end{equation} On the other hand, by the maximality of $k$, $B(x,r)$ intersects at least $2$ discs of $A_{k+1}$, including the disc containing $x$, and thus it follows that  $r>s_{k+1}$. This together with \eqref{zx} implies that  $r>r_{k+1}$ and so
\[
\frac{2r}{2r_{k+1} + s_{k+1}} \, \geq  \,  \frac{2}{3} \; .
\]
Hence,
\begin{align*}
n &     \stackrel{\eqref{nbound1}}{\leq}   \;  4\left(\frac{2r}{2r_{k+1} + s_{k+1}}\right)  \\[2ex]
&  \stackrel{\eqref{space}}{=}   \; 8r \left( 2r_{k+1}+2\frac{r_k-N_{k+1}r_{k+1}}{N_{k+1}-1} \right)^{-1}   \\[2ex]
& \leq   \; 4r\left( r_{k+1} + \frac{r_k-N_{k+1}r_{k+1}}{N_{k+1}}\right)^{-1} \\[2ex]
& =   \; 4\frac{N_{k+1}}{r_k} r \\[2ex]
&   \stackrel{\eqref{Nk1comparison}}{\leq}  \;    8\frac{f(r_k)}{f(r_{k+1})}\cdot \frac{r}{r_k} \\[2ex]
&  \stackrel{\eqref{frrdecreasing}}{\leq}  \;   \frac{8}{\kappa}\ \frac{f(r)}{f(r_{k+1})}
\end{align*}
\noindent
Since each subdisc of $A_{k+1}$ has measure $\frac{1}{N_1\cdots N_{k+1}}$, it follows that
$$ \mu\left( B(x,r)\right)\leq \frac{8}{\kappa}\ \frac{f(r)}{f(r_{k+1})}\cdot \frac{1}{N_1\cdots  N_{k+1}} \stackrel{\eqref{ab}}{\leq} \ \frac{8}{a \kappa}\ f(r).$$
{\bf Case 2:} \  $r\geq r_{k}.$ \ Let $C_{i_1\ldots i_k}$ be the unique disc of $A_k$ intersecting $B(x,r)$, which exists by the definition of $k$. Then
\[
\mu(B(x,r)) \leq \mu(C_{i_1\ldots i_k} )
= \frac{1}{N_1\cdots N_k}
\stackrel{\eqref{ab2}}{\leq} \frac{1}{a}f(r_k)
\leq \frac{1}{a}f(r),
\]
where in the last inequality, we have used the fact that $f$ is increasing.\\

\noindent
Thus in either case, \eqref{massdist} holds with $C = \max  \left\{ \frac{8}{a\kappa},\frac{1}{a}\right\} >0$.
\end{proof}
\vspace{2mm}

\begin{rem}
Note that Proposition A applies to any possible sequence of angles $(\theta_k)_{k=1}^{\infty}$, indicating that varying the sequence of angles may cause the quantity $\cH^f(A)$ to change slightly but will not affect the fact that it is finite and positive. The role of the sequence $(\theta_k)_{k=1}^{\infty}$ will become apparent in  the next section.
\end{rem}

%\subsection{An $f$-set with projections of zero $\cH^{g}$-measure }

\vspace{4mm}

\noindent{\bf \large{A.2 \ \  Proof of  Theorem A }}

\vspace{2mm}

We show that if  $g$ satisfies  the growth condition  \eqref{fgrelation}
relative to $f$, the sequences $(r_k)_{k=0}^{\infty},\ (N_k)_{k=1}^{\infty}$ and $(\theta_k)_{k=1}^{\infty}$ in the aforementioned construction can be suitably selected so that the corresponding $f$-set $A\subseteq \R^2$  satisfies $\cH^g(\proj_{\theta}A) = 0$ for all $0\leq \theta< \pi$.

\vspace{2mm}

First, we claim that the sequence $(r_k')_{k=k_0}^{\infty}$ defined by the formula
\begin{equation} \label{eqradii}
r_k' = (k\log k \log\log k)^{-k}
\end{equation}
satisfies \eqref{rad1} and \eqref{rad2} for all sufficiently large $k$. To prove this, we first observe that
\begin{equation} \label{thetarate}
\frac{r_{k+1}'}{r_k'}\ \asymp \ \frac{1}{k\log k\log\log k} \ \to \ 0   \qquad {\rm as } \hspace{7mm} k\to\infty.
\end{equation}
On the other hand, by the doubling and codoubling hypotheses imposed on $f$,  there exist constants $\kappa_1, \kappa_2 >0 $ such that
$$ \kappa_1 \lambda^{s_1} f(r) \leq f(\lambda r) \leq \kappa_2\lambda^{s_2} f(r) $$
for all $0<\lambda < 1$ and $r>0$ sufficiently small. Since $r_k' > r_{k+1}'$ for all $k$ sufficiently large, we have that
$$ f(r_{k+1}') \leq \kappa_2 \left( \frac{r_{k+1}'}{r_k'}\right)^{s_2} f(r_k) $$
and
\begin{eqnarray*}
\frac{ f(r_{k+1}')}{r_{k+1}'} &\geq & \kappa_1\frac{1}{r_{k+1}'}\left( \frac{r_{k+1}'}{r_k'}\right)^{s_1} f(r_k') \\[1ex]
& = & \kappa_1  \left( \frac{r_{k+1}'}{r_k'}\right)^{s_1-1} \frac{ f(r_k')}{r_k'}.
\end{eqnarray*}
Thus by \eqref{thetarate}, the inequalities $\eqref{rad1}_{r_k = r_k'}$ and $\eqref{rad2}_{r_k = r_k'}$ are satisfied for all $k$ large enough. Let $k_1 \geq k_0$ be chosen so that $\eqref{rad1}_{r_k = r_k'}$ and $\eqref{rad2}_{r_k = r_k'}$ are satisfied for all $k\geq k_1$.\\

\noindent
Now consider the sequence $(r_k)_{k=0}^\infty$ defined by the formula
\[
r_k = r_{k + k_1}',
\]
and note that \eqref{rad1} and \eqref{rad2} are satisfied for all $k \geq 0$. Thus by Proposition A, we can choose a sequence $(N_k)_{k=1}^{\infty}$ such that \eqref{ab2} and \eqref{rk1rk} hold for all $k\geq 0$. Also note that by \eqref{thetarate}  we have that
\begin{equation}
\label{thetarate2}
\frac{r_{k+1}}{r_k} \asymp \frac{1}{k\log k\log\log k}\cdot
\end{equation}
Let the sequence of angles be defined by
$$ \theta_{k+1}= \frac{r_{k+1}}{r_k}, \hspace{7mm} k\geq 0.$$
Then
$$\sum_{k=1}^{\infty}\theta_{k}=\infty.\vspace{2mm} $$
Take an arbitrary $0\leq \theta < \pi$. Let $d_{\theta}$ denote the direction perpendicular to $L_{\theta}$, i.e. the direction of projection, $d_\theta \equiv \theta + \pi/2 \; \text{(mod $\pi$)}$. Since the series $\sum_{k=1}^{\infty}\theta_k$ diverges, there are infinitely many $k\in\N$ such that $d_{\theta}$ lies between $d_k$ and $d_{k+1}$.  For each of these values of $k$, the angle between $d_\theta$ and $d_{k+1}$ is at most $\theta_{k+1}$, and thus for each disc $C_{i_1\ldots i_k}$ of $A_k$, the distances from the centers of all subdisc $C_{i_1\ldots i_k j}$ of $C_{i_1\ldots i_k}$ from the diameter of $C_{i_1\ldots i_k}$ in the direction $d_{\theta}$ are at most
$$ r_k\sin \theta_{k+1} \leq r_k \theta_{k+1} =r_{k+1}.$$
This is because all of these centers lie on the diameter of $C_{i_1\ldots i_k}$ in the direction $d_{k+1}$. This means that within each disc of $A_k$, when we project the union of the subdiscs of $A_{k+1}$ onto $L_{\theta}$ we get an interval of length at most $4r_{k+1}$, which we can think of as the union of at most $4$ intervals of length at most $r_{k+1}$. The number of such intervals is equal to the number of discs of $A_k$, that is, $N_1\cdots N_k \leq  \frac{2a}{f(r_k)}$.\\

\noindent
We have shown that for infinitely many values of $k$ there is a cover of $\proj_{\theta}A$ which consists of $4N_1\cdots N_k$ intervals of length at most $r_{k+1}$, hence for such $k$ we obtain that
\begin{eqnarray} \label{cost}
\cH^{g}_{r_{k+1}}( \proj_{\theta}A ) & \leq  &   8a\frac{g(r_{k+1})}{f(r_k)}\   \nonumber \\[2ex]
 & \leq   &  8a M \frac{1}{f(r_k)} f\left(r_{k+1}\log(r_{k+1}^{-1})\log\log(r_{k+1}^{-1})\right).
\end{eqnarray}
Now \eqref{eqradii} implies that
\begin{align*}
\log(r_k^{-1}) &\asymp \log(r_k'^{-1}) \asymp k\log( k\log k\log\log k) \asymp k\log k, \\[2ex]
\log\log(r_k^{-1}) &\asymp \log\log(r_k'^{-1}) \asymp \log(k\log k) \asymp \log k
\end{align*}
as $k\to\infty$. Combining this estimate with \eqref{thetarate2}, \eqref{cost}, and the fact that $f$ is codoubling shows that
$$ \cH^{g}_{r_{k+1}}( \proj_{\theta}A )\leq \frac{M_1}{(\log k)^{s_2}} \, ,$$
where $M_1>0$ is some absolute constant. This implies that
$$ \cH^g(\proj_{\theta}A) = \lim_{k\to\infty} \cH^{g}_{r_{k+1}}( \proj_{\theta}A ) = 0$$
and thereby completes the proof of Theorem A.

%\vspace{2mm}
%\begin{rem}
%This result does not contradict the main theorem, since when $f$ and $g$ satisfy \eqref{fgrelation} we have
%\begin{eqnarray*}
%\int_{0}^{1}\frac{\mathrm{d}f(r)}{g(r)} &\gg & \int_{0}^{1}\frac{\mathrm{d}f(r)}{f\left( r\log (r^{-1})\right)} \\ [2ex]
%& =& \int_{0}^{1}\frac{\mathrm{d}r}{f^{-1}(r)\log \frac{1}{f^{-1}(r)}} \\ [2ex]
%&\gg & \int_{0}^{1} \frac{\mathrm{d}r}{r\log(r^{-1})} \\ [2ex]
%&=& +\infty,
%\end{eqnarray*}
%and $f,g$ do not satisfy the integral condition \eqref{(1)}.
%\end{rem}
%\vspace{2mm}
%\begin{rem}
%\label{remlogp}
%The growth condition  \eqref{fgrelation} in Theorem A   can be replaced by any condition of the form
%$$ g(r) \ll f\left(r\log(r^{-1})\log_3(r^{-1})\log_4(r^{-1})\cdots \log_p(r^{-1}) \right), \hspace{6mm} r\to 0^+$$
%where $p\geq 3$ is a positive integer. Here we write $\log_2t= \log\log t, \ \log_3 t=\log\log\log t,$ \ etc. The corresponding set in that case is constructed using the sequence $(r_k')_{k\geq k_0}$ defined by the formula
%$$ r_k' = \left( k\log k \log_2 k \cdots \log_p k \right)^{-k}.   $$  The proof is nearly identical and we leave it to the interested reader.
%\end{rem}
%
%

\vspace{2mm}
\begin{rem}
\label{remlogpa}
As mentioned in Remark \ref{remlogp}, the growth condition  \eqref{fgrelation} in Theorem A  can be replaced by any condition of the form \eqref{codouble22}.
The corresponding set in that case is constructed using the sequence $(r_k')_{k\geq k_0}$ defined by the formula
$$ r_k' = \left( k\log k \log_2 k \cdots \log_p k \right)^{-k}.   $$  The proof is nearly identical and we leave it to the interested reader.
\end{rem}

%%%%%%%%%%%%%%%%%%%%%%%%%%%%%%

{\small 

}

\vspace{7mm}

\noindent David Simmons: Department of Mathematics,
University of York,

\vspace{-2mm}

\noindent\phantom{David Simmons: }Heslington, York, YO10
5DD, England.

%\vspace{0mm}

\noindent\phantom{David Simmons: }e-mail: david.simmons@york.ac.uk

%%%%%%%%%*******************

%%%%%%%%%*******************

\vspace{5mm}

\noindent Han Yu: Mathematical Institute,
University of St Andrews,

\vspace{-2mm}

\noindent\phantom{Han Yu: }North Haugh, St Andrews, Fife,
KY16 9SS,
Scotland.

%\vspace{0mm}

\noindent\phantom{Han Yu: }e-mail: hy25@st-andrews.ac.uk

%%%%%%%%%*******************

%%%%%%%%%*******************

\vspace{5mm}

\noindent Agamemnon Zafeiropoulos: Department of Mathematics,
University of York,

\vspace{-2mm}

\noindent\phantom{Agamemnon Zafeiropoulos: }Heslington, York, YO10
5DD, England.

%\vspace{0mm}

\noindent\phantom{Agamemnon Zafeiropoulos: }e-mail: az629@york.ac.uk

\end{appendix}


\begin{thebibliography}{99}



\bibitem{Beresnevich-Bernik-Dodson-Velani-Roth}
{V.~Beresnevich, V.~Bernik, M.~Dodson and  S.~L.~Velani},
Classical metric Diophantine approximation revisited, in  {\em   Analytic Number Theory.  Essays in Honour of Klaus Roth}. Eds. W. Chen, T. Gowers, H. Halberstam, W.M. Schmidt and R.C. Vaughan,  pp.~38--61, Cambridge University Press, 2009.


%\bibitem{memoirs} V. Beresnevich, H. Dickinson and
%S.L. Velani, {\em Measure theoretic laws for lim sup sets}. Mem. Amer. Math. Soc. {\bf 179} (2006) no. 846, 1--91.
%
%
%
%\bibitem{durham} V. Beresnevich, F. Ram\'{\i}rez and
%S.L. Velani, {\em
%Classical metric Diophantine approximation revisited},
%in Dynamics and Analytic Number Theory, Editors: Dmitry Badziahin, Alex Gorodnik, and Norbert Peyerimhoff. LMS Lecture Note Series 437, Cambridge University Press, (2016).   Preprint: arXiv:1601.01948.
%
%\bibitem{BV06}
%V.~Beresnevich, S.~L. Velani, \emph{A {M}ass {T}ransference
%{P}rinciple and
%  the {D}uffin-{S}chaeffer conjecture for {H}ausdorff measures}, Ann. \ Math.
%  \textbf{164} (2006), 971--992.

\bibitem{HDSV97}
H.~Dickinson and S.~L. Velani, Hausdorff measure and linear
forms, {\em J.\
  Reine Angew.\ Math.}   490 (1997), 1--36.

\bibitem{CT}
Z. Ciesielsk and S. J.  Taylor,
First passage times and sojourn times for Brownian motion in space and the exact Hausdorff measure of the sample path,
{\em Trans. Amer. Math. Soc.} 103 (1962), 434--450.

\bibitem{Dav}
R. O. Davis, Sets which are null or non-sigma-finite for every translation-invariant measure, {\em Mathematika} 18 (1971), 161--162.

\bibitem{F1}
K. J. Falconer, {\em The Geometry of Fractal Sets}, Cambridge University Press, 1985.

\bibitem{F}
K. J. Falconer, {\em Fractal Geometry: Mathematical Foundations and
Applications}, 3rd. Ed. John Wiley, 2014.

\bibitem{falfrajin}
K. J. Falconer,  J. Fraser and  X.  Jin,
Sixty years of fractal projections, in {\em Fractal Geometry and Stochastics V},  Eds. C. Bandt, K. J. Falconer and  M. Zahle, pp.~3--25,  Birkhauser, 2015.

\bibitem{HS}
M. Hochman and P. Shmerkin, Local entropy averages and projections of fractal measures, {\em Ann. of
Math.}(2) 175 (2012), 1001--1059.

\bibitem{K}
R. Kaufman, On the Hausdorff dimension of projections, {\em Mathematika} 15 (1968), 153--155.

\bibitem{Mar}
J. Marstrand, Some fundamental geometrical properties of plane sets of fractional dimension, {\em Proc. London Math. Soc.}(3) 4 (1954), 257--302.

\bibitem{Mat}
P. Mattila, {\em  Geometry of Sets and Measures in Euclidean Space}, Cambridge University Press, 1995.


\bibitem{Mat2}
P. Mattila, {\em  Fourier Analysis and Hausdorff Dimension, }Cambridge University Press, 2015.

\bibitem{mm}
M.A. Martin and P. Mattila,
{\em $k$-dimensional regularity classifications for $s$-fractals,} Trans. Amer. Math. Soc. {\bf 305} (1988), 293--315.

\bibitem{PS}  Y. Peres and P. Shmerkin, Resonance between Cantor sets, {\em Ergodic Theory Dynam. Systems} 29 (2009),  201--221.

\bibitem{PSol}  Y. Peres and P. Solomyak, The sharp Hausdorff measure condition for length of projections, {\em Proc. Amer. Math. Soc.} 133 (2005),  3371--3379.

\bibitem{Rog}
C. A. Rogers, {\em  Hausdorff Measures}, 2nd Ed. Cambridge University Press, 1998.

\bibitem{T}
S. J. Taylor,  On the connexion between Hausdorff measures and generalized capacity, {\em Proc. Cambridge Philos. Soc.} 57  (1961), 524--531.

\bibitem{T64}
S. J. Taylor,  The exact Hausdorff measure of the sample path for planar Brownian motion,
{\em Proc. Cambridge Philos. Soc.} 60 (1964), 253--258.




\end{thebibliography}

\begin{thebibliography}{99}


\bibitem{mem}
V. Beresnevich, D. Dickinson, S. Velani, {\em Measure theoretic laws for limsup sets, } Memoirs of the American Mathematical Society {\bf 179 }(2006), no. 846, 1-91.

%\bibitem{F}
%K.J. Falconer, {\em Fractal Geometry: Mathematical Foundations and Applications.} John Wiley \& Sons, 1990.


\bibitem{mma}
M. Martin and P. Mattila,
{\em $k$-dimensional regularity classifications for $s$-fractals,} Trans. Amer. Math. Soc. {\bf 305} (1998), 293-315.


%\bibitem{Mata}
%P. Mattila, {\em  Geometry of Sets and Measures in Euclidean Space}, Cambridge University Press, 1995.
%



\end{thebibliography}
\end{document}

************************************************************************
**************************************************************************